\newtheorem{theorem}{Theorem}
\newtheorem{proposition}{Proposition}
\newtheorem{lemma}{Lemma}
\newtheorem{remark}{Remark}
\newtheorem{definition}{Definition}
\newtheorem{example}{Example}
\newtheorem{notation}{Notation}
\newtheorem{conjecture}{Conjecture}
\numberwithin{equation}{section}
\numberwithin{theorem}{section}
\numberwithin{lemma}{section}
\numberwithin{proposition}{section}
\numberwithin{definition}{section}
\numberwithin{remark}{section}
\newenvironment{proof}[1][Proof]{\noindent\textbf{#1.} }{\ \rule{0.5em}{0.5em}}
\begin{document}

\title{Illumination by Tangent Lines}
\author{Alan Horwitz \\
25 Yearsley Mill Rd.\\
Penn State Brandywine\\
Media, PA 19063\\
USA\\
alh4@psu.edu}
\date{7/27/11}
\maketitle

\begin{abstract}
Let $f$ be a differentiable function on the real line, and let $P\in
G_{f}^{C}=$ all points not on the graph of $f$. We say that the \textit{%
illumination index} of $P$, denoted by $I_{f}(P)$, is $k$ if there are $k$
distinct tangents to the graph of $f$ which pass through $P$. In section 2
we prove results about the illumination index of $f$ with $f\,^{\prime
\prime }(x)\geq 0$ on $\Re $. In particular, suppose that $y=L_{1}(x)$ and $%
y=L_{2}(x)$ are distinct oblique asymptotes of $f$ and let $P=(s,t)\in
G_{f}^{C}$. If $\max \left( L_{1}(s),L_{2}(s)\right) <t<f(s)$, then $%
I_{f}(P)=2$. If $L_{1}(s)\neq L_{2}(s)$\textbf{\ }$\ $and $\min \left(
L_{1}(s),L_{2}(s)\right) <t\leq \max \left( L_{1}(s),L_{2}(s)\right) $, then 
$I_{f}(P)=1$.

Finally, if $t\leq \min \left( L_{1}(s),L_{2}(s)\right) $, then $I_{f}(P)=0$%
. We also show that any point below the graph of a convex rational function
or exponential polynomial must have illumination index equal to $2$. In
section 3 we also prove results about the illumination index of polynomials.

2000 Mathematics Subject Classification: 26A06

Key Words: tangent line, oblique asymptote, illumination index 
\end{abstract}

\section{Introduction\label{S1}}

Let $f$ be a differentiable function on the real line, $\Re $, and let $P$
be any point not on the graph of $f$. We say that the \textit{illumination
index} of $P$, denoted by $I_{f}(P)$, equals the non--negative integer, $k$,
if there are $k$ distinct tangents to the graph of $f$ which pass through $P$%
. We allow the possibility that $k=\infty $. In \cite{H} we proved some
results about $I_{f}(P)$ and also about illumination by odd order Taylor
Polynomials in general. In this paper we focus just on illumination by
tangent lines. In particular, in section 1 we prove several theorems about $%
I_{f}(P)$ for functions with a non--negative second derivative on $\Re $. An
example was given in \cite{H} where $f\,^{\prime \prime }(x)\geq 0$ on $\Re $%
, but where there are points below the graph of $f$ whose illumination index
equals $0$. In this paper we strengthen these results for functions, $f$,
with $f\,^{\prime \prime }(x)\geq 0$ on $\Re $. First, we prove(Theorem \ref%
{T2}) that if $f$ has oblique asymptotes $L_{1}$and $L_{2}$, then the
illumination index equals $2$ for any point below the graph of $f$, but
above both $L_{1}$ and $L_{2}$. For points lying between $L_{1}$ and $L_{2}$%
, the illumination index equals $1$, while for any point below both $L_{1}$
and $L_{2}$, the illumination index equals $0$. Similar results(Theorem \ref%
{T3}) are proven when $f$ has one oblique asymptote. In (\cite{H}) we proved
that if the second derivative of $f$ is bounded below by a positive number
on the entire real line, and if $P$ is any point below the graph of $f$,
then the illumination index of $P$ equals $2$. We strengthen this result in
Theorem \ref{T4} by proving that if $\lim\limits_{\left\vert x\right\vert
\rightarrow \infty }\left( xf\,^{\prime \prime }(x)\right) \neq 0$, then the
illumination index of any point below the graph of $f$ equals $2$. We also
show(Propositions \ref{P1} and \ref{P2}) that any point below the graph of a
convex rational function or exponential polynomial must have illumination
index equal to $2$. Finally in section 3 we prove several results about the
illumination index for polynomials.

\begin{notation}
$\Re =$ real numbers. Given any function, $y=f(x)$ defined on $\Re $, we let

\begin{equation*}
G_{f}=(x,f(x)):x\in \Re ,
\end{equation*}

the graph of $f$, and 
\begin{equation*}
G_{f}^{C}=(x,y):x\in \Re ,y\neq f(x),
\end{equation*}

all points in the $xy$ plane \textbf{not} on the graph of $f$.

\begin{equation*}
y=T_{c}(x)=f(c)+f\,^{\prime }(c)(x-c)
\end{equation*}

denotes the tangent line to $f$ at $(c,f(c))$.

For $s\in \Re $, we let 
\begin{equation*}
I_{1}=(-\infty ,s),I_{2}=(s,\infty ).
\end{equation*}
\end{notation}

\begin{definition}
\label{D1}Let $f$ be a differentiable function on the real line, and let $%
P\in G_{f}^{C}$. We say that the \textit{illumination index} of $P$, denoted
by $I_{f}(P)$, equals the non--negative integer $k$, if there are $k$
distinct tangents to the graph of $f$ which pass through $P$.
\end{definition}

\begin{remark}
In the definition above, one could allow for points, $P\in G_{f}$. However,
we prefer to just define $I_{f}(P)$ for $P\in G_{f}^{C}$. Also, if there are 
$k$ distinct tangents to the graph of $f$ which pass through $P,$ and if at
least one of the tangent lines is tangent to the graph of $f$ at more than
one point, we still count the illumination index\textit{\ as }$k$. One
could, of course, define $I_{f}(P)$ so as to count the number of points at
which $T$ is tangent to the graph of $f$ .
\end{remark}

Before stating and proving our main results, it is useful to define the
following function: If $f$ is a differentiable function on $\Re $ and $s\in
\Re $, let 
\begin{equation*}
g_{s}(c)=f(c)+(s-c)f\,^{\prime }(c)=T_{c}(s).
\end{equation*}%
Then 
\begin{equation}
T_{c_{0}}(s)=t\iff g_{s}(c_{0})=t.  \label{4}
\end{equation}%
That is, the tangent line at $\left( c_{0},f(c_{0})\right) $ passes thru $%
P=(s,t)$ if and only if $g_{s}(c_{0})=t$.

\begin{remark}
We find it convenient to use the notation $g_{s}(c)$ rather than $T_{c}(s)$
since we want to keep $s$ fixed while allowing $c$ to vary.
\end{remark}

\section{Functions with Non--negative Second Derivative\label{S2}}

In this section we prove some results about the illumination index for
functions, $f$, with $f\,^{\prime \prime }(x)\geq 0$ on $\Re $. We do not
assume continuity of $f\,^{\prime \prime }$, but the existence of $%
f\,^{\prime \prime }$ on $\Re $ implies that $g_{s}$ is differentiable on $%
\Re $. First we need the following result about \textit{multiple tangent
lines}, which is a tangent line which is tangent to the graph of $f$ at more
than one point.

\begin{lemma}
\label{mult}If $f\,^{\prime \prime }(x)\geq 0$ on $\Re $ and $f$ is not
linear on any subinterval of $\Re $, then $f$ has no multiple tangent lines.
\end{lemma}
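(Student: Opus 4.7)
The plan is to argue by contradiction. Suppose $L$ is a multiple tangent line, tangent to the graph at two distinct points $(c_1, f(c_1))$ and $(c_2, f(c_2))$ with $c_1 < c_2$. Since $L$ coincides with both tangent lines $T_{c_1}$ and $T_{c_2}$, matching slopes forces $f'(c_1) = f'(c_2)$.

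Next I would use that $f''(x) \geq 0$ on $\Re$ implies $f'$ is non-decreasing on $\Re$. Since $f''$ is not assumed continuous, I cannot invoke the fundamental theorem of calculus directly, but I can apply the mean value theorem to $f'$ on $[c_1, c_2]$: for any $a < b$ in $\Re$, $f'(b) - f'(a) = f''(\xi)(b - a) \geq 0$ for some $\xi \in (a,b)$. Hence $f'$ is non-decreasing. Combined with the equality $f'(c_1) = f'(c_2)$, this forces $f'(x)$ to be constant on the entire interval $[c_1, c_2]$.

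A constant derivative on $[c_1, c_2]$ means $f$ is linear on $[c_1, c_2]$, which contradicts the hypothesis that $f$ is not linear on any subinterval of $\Re$. Thus no multiple tangent lines can exist.

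I do not anticipate a real obstacle here; the only subtlety is that $f''$ is not assumed continuous, which is handled cleanly by the MVT argument for $f'$. Note that the equality of $y$-intercepts of $T_{c_1}$ and $T_{c_2}$ is never actually needed — the slope condition alone produces the contradiction, so the proof is essentially one short chain of implications.
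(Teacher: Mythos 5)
Your proof is correct and follows essentially the same route as the paper: equate the slopes at the two tangency points, use $f''\geq 0$ to conclude $f'$ is non-decreasing and hence constant on the interval between them, and derive linearity there, contradicting the hypothesis. Your explicit MVT justification that $f'$ is non-decreasing without assuming continuity of $f''$ is a small but worthwhile refinement of a step the paper simply asserts.
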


\begin{proof}
Suppose that $f$ has a multiple tangent line, $T$, which is tangent at $%
(a,f(a))$ and at $(b,f(b))$ for some $a\neq b$. Then $\dfrac{p(b)-p(a)}{b-a}%
=p^{\prime }(a)=p^{\prime }(b)$. Since $f\,^{\prime \prime }(x)\geq 0$ on $%
\Re $, $p^{\prime }(a)\leq p^{\prime }(x)\leq p^{\prime }(b)$ for any $x\in
\lbrack a,b]$. Thus $p^{\prime }(x)$ is constant on $[a,b]$, which implies
that $f$ is linear on $[a,b]$.
\end{proof}

For functions, $f$, with $f\,^{\prime \prime }(x)\geq 0$ on $\Re $, part (i)
of the following lemma shows that $g_{s}$ has one local extremum, a local
maximum when $c=s$. Part (ii) shows that if $c_{i}<c_{j}$ are any two roots
of $g_{s}-t$, then there are two possibilities: Either the tangents to $f$
at $\left( c_{i},f\left( c_{i}\right) \right) $ and at $\left( c_{j},f\left(
c_{j}\right) \right) $ are distinct, or $f$ is linear on the closed interval 
$[c_{i},c_{j}]$.

\begin{lemma}
\label{L1}(i) If $f\,^{\prime \prime }(x)\geq 0$ on $\Re $, then for any
given $s\in \Re ,g_{s}(c)$ is non--decreasing on $I_{1}$ and non--increasing
on $I_{2}$.

(ii) For given $t\in \Re $, there are two possibilities for the number of
solutions of the equation $g_{s}(c)=t$ in $I_{j},j=1,2$.

(A) $g_{s}(c)=t$ has at most one solution in $I_{j}$, or

(B) $g_{s}(c)=t$ for all $c$ in some interval, $I$, contained in $I_{j}$. In
that case $f(x)=mx+b$ for all $x\in I$, which implies that $T_{c}(x)=f(x)$
for

$c,x\in I$. In addition, $g_{s}(c)=f(s)$ if $s\in I$.
\end{lemma}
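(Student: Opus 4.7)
The plan is to prove (i) by direct differentiation, and then leverage the monotonicity established in (i) to handle (ii).

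For part (i), I would compute $g_s'(c)$ explicitly. Differentiating $g_s(c)=f(c)+(s-c)f'(c)$ with respect to $c$ gives
\[
g_s'(c) = f'(c) - f'(c) + (s-c)f''(c) = (s-c)f''(c).
\]
Since $f''(c)\geq 0$ on $\Re$, the sign of $g_s'(c)$ is determined by the sign of $s-c$. On $I_1=(-\infty,s)$ we have $s-c>0$, so $g_s'\geq 0$ and $g_s$ is non-decreasing; on $I_2=(s,\infty)$ we have $s-c<0$, so $g_s'\leq 0$ and $g_s$ is non-increasing. This also shows $g_s$ attains a local maximum at $c=s$, which matches the motivating remark preceding the lemma.

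For part (ii), I would argue by monotonicity: suppose $g_s(c_1)=g_s(c_2)=t$ with $c_1<c_2$ both in $I_j$. Since $g_s$ is monotone on $I_j$ by (i), it must be constant equal to $t$ on the entire interval $I=[c_1,c_2]\subset I_j$. Then $g_s'(c)=0$ on $I$, and since $s-c\neq 0$ on $I_j$, the identity $g_s'(c)=(s-c)f''(c)$ forces $f''(c)=0$ throughout $I$. A function with identically vanishing second derivative on an interval is affine there, so $f(x)=mx+b$ on $I$ for some constants. This proves the dichotomy (A) vs. (B).

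The remaining assertions in (B) are routine verifications. For $c,x\in I$ with $f|_I(u)=mu+b$, the tangent line is $T_c(x)=f(c)+f'(c)(x-c)=(mc+b)+m(x-c)=mx+b=f(x)$, establishing $T_c(x)=f(x)$ for $c,x\in I$. Finally, if $s\in I$ (interpreted as $s$ lying in the affine interval, possibly as an endpoint), then the same computation with $x=s$ gives $T_c(s)=ms+b=f(s)$, i.e. $g_s(c)=f(s)$. There is no real obstacle here; the only subtle point is recognizing that monotonicity plus two equal values forces constancy on the entire intervening interval, which is why part (i) is placed first.
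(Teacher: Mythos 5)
Your proposal is correct and follows essentially the same route as the paper: differentiate to get $g_s'(c)=(s-c)f''(c)$ for part (i), then use monotonicity on $I_j$ to derive the dichotomy in part (ii), with $f''\equiv 0$ on the constancy interval forcing $f$ to be affine there. Your write-up is slightly more explicit than the paper's (e.g., spelling out why two equal values force constancy on the whole intervening interval, and verifying $T_c(x)=f(x)$ by direct computation), but there is no substantive difference.
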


\begin{proof}
(i) Since\textbf{\ }$g_{s}^{\prime }(c)=(s-c)f\,^{\prime \prime }(c)$, $%
g_{s}^{\prime }(c)$ is $\QDATOPD\{ \} {\geq 0,c<s}{\leq 0,c>s}$

(ii) By part (i), $g_{s}(c)=t$ has at most one solution in $I_{j}$, or $%
g_{s}(c)=t$ for all $c$ in some interval, $I$, contained in $I_{j}$. If $%
g_{s}(c)$ is constant on $I$, then $g_{s}^{\prime }(c)=0,c\in I$, which
implies that $(s-c)f\,^{\prime \prime }(c)=0,c\in I$. Thus $f\,^{\prime
\prime }(x)=0$ for $x\in I,x\neq s$, which implies that $f$ is linear on $I$%
. That implies that $T_{c}(x)=f(x)$ for $c,x\in I$. If $s\in I$, then $%
g_{s}(c)=T_{c}(s)=f(s)$.
\end{proof}

The following lemma was proved in \cite{H} with the assumption that $%
f\,^{\prime \prime }$ is continuous, non--negative, and has \textit{finitely
many }zeros in $\Re $. We have need for a somewhat stronger version here.

\begin{lemma}
\label{L2}Suppose that $f\,^{\prime \prime }(x)\geq 0$ on $\Re $. Then at
most two distinct tangent lines to $f$ can pass through any given point $P$
in the plane.
\end{lemma}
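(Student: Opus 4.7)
The plan is to fix $P=(s,t)$ in the plane and parametrize the tangent lines to $f$ through $P$ by the roots of the auxiliary equation $g_s(c)=t$, using the equivalence (\ref{4}). By Lemma \ref{L1}(i), $g_s$ is non-decreasing on $I_1=(-\infty,s)$ and non-increasing on $I_2=(s,\infty)$, so all the work reduces to counting the \emph{distinct} tangent lines that arise from roots in each of these two half-lines. Lemma \ref{L1}(ii) handles exactly this dichotomy: in each $I_j$ the root set of $g_s-t$ is either a single point, or an entire subinterval on which $f$ is linear and on which every choice of $c$ produces the very same tangent line. Thus $I_1$ contributes at most one tangent line through $P$, and likewise for $I_2$.

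To finish I would dispose of the candidate $c=s$. If $P\in G_f^{C}$, so $t\neq f(s)$, this is immediate: $g_s(s)=f(s)\neq t$, so $c=s$ is not a root, and the two half-line contributions give at most $2$ distinct tangent lines, as required. For completeness, in the case $P\in G_f$ I would check that any extra root $c_0\in I_1\cup I_2$ of $g_s-f(s)$ yields the same line as $T_s$: continuity of $g_s$ (which holds because $f'$ is a monotone Darboux function and hence continuous), together with the one-sided monotonicity and the boundary value $g_s(s)=f(s)$, forces $g_s\equiv f(s)$ on the closed interval between $c_0$ and $s$, so case (B) of Lemma \ref{L1}(ii) makes $f$ linear there and $T_{c_0}=T_s$. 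The case $c_0>s$ is symmetric, so in every situation the count is at most $2$.

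I do not anticipate any serious obstacle, since the substantive content is already packaged in Lemma \ref{L1}; what remains is essentially bookkeeping across the three regions $I_1$, $\{s\}$, $I_2$. The only point that demands a moment's care is verifying that case (B) of Lemma \ref{L1}(ii) does not manufacture additional tangent lines when the root set is an interval rather than a point---but this is precisely what the clause ``$T_c(x)=f(x)$ for $c,x\in I$'' in Lemma \ref{L1}(ii) was set up to certify.
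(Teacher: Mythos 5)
Your proof is correct, but it takes a genuinely different route from the paper's. The paper disposes of Lemma \ref{L2} by deferring to the proof of Lemma 2 of \cite{H}, supplying only the key geometric fact: two distinct tangent lines, tangent at $c_{1}$ and $c_{2}$, are non-parallel and meet at a point whose abscissa $u$ satisfies $c_{1}<u<c_{2}$; three distinct tangents through $P$ tangent at $c_{1}<c_{2}<c_{3}$ would then force the abscissa of $P$ to lie both in $(c_{1},c_{2})$ and in $(c_{2},c_{3})$, a contradiction. You instead count roots of $g_{s}(c)=t$ via the equivalence (\ref{4}) and the monotonicity of $g_{s}$ on $I_{1}$ and $I_{2}$ from Lemma \ref{L1}, with case (B) of Lemma \ref{L1}(ii) certifying that an interval of roots contributes only one line. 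Your route buys self-containedness (it leans only on a lemma proved in this paper, not on an external reference) and it is the same machinery the paper reuses in Theorems \ref{T2} and \ref{T3}, so it fits the surrounding development well; the paper's route is shorter once the intersection fact is granted and handles points of $G_{f}$ without a separate case. Your extra argument for $P\in G_{f}$ (monotonicity plus the boundary value $g_{s}(s)=f(s)$ forces $g_{s}\equiv f(s)$ between $c_{0}$ and $s$, hence $T_{c_{0}}=T_{s}$) is needed because the lemma is stated for any point of the plane, and it is sound; the aside about $f'$ being a Darboux function is superfluous, since the standing hypothesis that $f''$ exists on $\Re$ already makes $f'$ continuous and $g_{s}$ differentiable, as the paper notes at the start of Section \ref{S2}.
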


\begin{proof}
The details follow exactly as in the proof of (\cite{H}, Lemma 2) using the
following facts: Suppose that $T_{1}$ and $T_{2}$ are distinct tangent lines
which are tangent to $f$ at $\left( c_{1},f\left( c_{1}\right) \right) $ and 
$\left( c_{2},f\left( c_{2}\right) \right) $, respectively. Then $T_{1}$ and 
$T_{2}$ are not parallel and if $(u,v)=$ intersection point of $T_{1}$ and $%
T_{2}$, then $c_{1}<u<c_{2}$. We leave the rest of the details to the reader.
\end{proof}

\begin{definition}
\label{D2}A line with equation $y=L(x)$ is said to be an oblique asymptote
of $f$ if $\lim\limits_{x\rightarrow -\infty }(f(x)-L(x))=0$ and/or $%
\lim\limits_{x\rightarrow \infty }(f(x)-L(x))=0$.
\end{definition}

\begin{lemma}
\label{L3}Suppose that $f\,^{\prime \prime }(x)\geq 0$ on $\Re $, and let $%
s\in \Re $.
\end{lemma}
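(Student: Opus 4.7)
The statement of Lemma~\ref{L3} is truncated in the excerpt after ``let $s\in\Re$,'' but its role is evidently to feed into Theorem~\ref{T2}, which compares the illumination index of $P=(s,t)$ with the heights $L_1(s)$ and $L_2(s)$. The natural conclusion to prove is therefore
\[
\lim_{c\to -\infty} g_s(c)=L_1(s)\qquad\text{and}\qquad \lim_{c\to +\infty} g_s(c)=L_2(s),
\]
whenever the corresponding oblique asymptote exists. I will sketch a plan for the $-\infty$ case; the $+\infty$ case is entirely symmetric.

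Set $\phi(c)=f(c)-L_1(c)$, so that $\phi$ is convex ($\phi''=f''\geq 0$) and $\phi(c)\to 0$ as $c\to -\infty$. The first step is to show $\phi'(c)\to 0$ at $-\infty$ and consequently $\phi\geq 0$. Since $\phi'$ is nondecreasing, the limit $L=\lim_{c\to -\infty}\phi'(c)$ exists in $[-\infty,\infty)$. Ruling out $L>0$: then $\phi'\geq L$ on all of $\Re$ forces $\phi(c_0)-\phi(c)\geq L(c_0-c)\to +\infty$ as $c\to -\infty$, contradicting $\phi\to 0$. Ruling out $L<0$ (including $L=-\infty$): pick $c_0$ with $\phi'(c_0)<0$; then for $c<c_0$, $\phi'(c)\leq \phi'(c_0)<0$, and integration gives $\phi(c)-\phi(c_0)\geq |\phi'(c_0)|(c_0-c)\to +\infty$, again a contradiction. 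Hence $L=0$, and since $0$ is the infimum of $\phi'$, $\phi'\geq 0$ everywhere; so $\phi$ is nondecreasing with $\phi(c)\to 0$ at $-\infty$, i.e., $\phi\geq 0$.

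The main obstacle is the next step: I must show $(s-c)\phi'(c)\to 0$, or equivalently $c\phi'(c)\to 0$. The bare convergence $\phi'(c)\to 0$ is not strong enough, so I exploit convexity through a mean-value argument. For $c<0$ of large magnitude, the MVT applied to $\phi$ on $[c,c/2]$ produces $\xi_c\in(c,c/2)$ with
\[
\phi(c/2)-\phi(c)=\phi'(\xi_c)\,(c/2-c)=\tfrac{|c|}{2}\,\phi'(\xi_c).
\]
The left side tends to $0$, so $|c|\phi'(\xi_c)\to 0$. Because $c<\xi_c$ and $\phi'$ is nondecreasing, $\phi'(c)\leq \phi'(\xi_c)$, and therefore $|c|\phi'(c)\leq |c|\phi'(\xi_c)\to 0$, as required.

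Finally, with $L_1(c)=m_1c+b_1$ and $f'(c)=m_1+\phi'(c)$, substitute:
\[
g_s(c)=f(c)+(s-c)f'(c)=L_1(c)+\phi(c)+(s-c)\bigl(m_1+\phi'(c)\bigr)=L_1(s)+\phi(c)+(s-c)\phi'(c),
\]
using $L_1(c)+(s-c)m_1=m_1s+b_1=L_1(s)$. Both error terms vanish by the previous steps, yielding $g_s(c)\to L_1(s)$. The hard part was the quantitative decay $c\phi'(c)\to 0$, which is exactly where convexity had to do real work beyond merely giving $\phi'\to 0$.
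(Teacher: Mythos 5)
Your reconstruction of the missing conclusion is right ($g_s(c)\to L(s)$ as $c$ tends to the relevant infinity), and your proof is correct, but it takes a genuinely different and more elementary route than the paper's. The paper first reduces to $L=0$, proves $f'(x)\to 0$ by a partition/squeeze argument, and then evaluates $\lim_{c\to\infty}g_0(c)$ by writing $f(0)-g_0(c)=\int_0^c t f''(t)\,dt$ (integral form of Taylor's remainder), integrating by parts, and computing the improper integral $\int_0^\infty t f''(t)\,dt=f(0)$ via a Laplace-transform identity $G(u)=L\{tf''(t)\}(u)$ evaluated at $u=0$ --- a step that tacitly invokes an Abelian-type continuity of the transform at $0$. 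You instead work directly with $\phi=f-L$ and obtain the key quantitative decay $c\,\phi'(c)\to 0$ from the mean value theorem on $[c,c/2]$ together with monotonicity of $\phi'$, then finish by the algebraic identity $g_s(c)=L(s)+\phi(c)+(s-c)\phi'(c)$. Your squeeze $0\le|c|\phi'(c)\le|c|\phi'(\xi_c)\to 0$ is legitimate because you have already shown $\phi'\ge 0$. What your approach buys is self-containedness and transparency: it avoids both the convergence-of-improper-integral bookkeeping and the Laplace-transform machinery, and it isolates exactly where convexity is used beyond $\phi'\to 0$. What the paper's approach buys is the explicit identity $\int_0^\infty tf''(t)\,dt=f(0)$, which is of some independent interest but is not needed for the lemma itself.
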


(i) If $\lim\limits_{x\rightarrow \infty }(f(x)-L(x))=0$ for some linear
function, $L$, then

$\lim\limits_{c\rightarrow \infty }g_{s}(c)=L(s)$

(ii) If $\lim\limits_{x\rightarrow \,-\infty }(f(x)-L(x))=0$ for some linear
function, $L$, then

$\lim\limits_{c\rightarrow -\infty }g_{s}(c)=L(s)$

\begin{proof}
We prove (i). Assume first that $L=0$--that is, $\lim\limits_{x\rightarrow
\infty }f(x)=0$. Choose any $h>0$ and partition $[s,\infty )$ into
infinitely many subintervals, $[x_{k-1},x_{k}]$, of constant width $h$.
Since $f$ is convex, 
\begin{equation*}
\dfrac{f(x_{k})-f(x_{k-1})}{h}\leq f\,^{\prime }(x_{k})\leq \dfrac{%
f(x_{k+1})-f(x_{k})}{h}.
\end{equation*}%
Since $f(x_{k})-f(x_{k-1})\rightarrow 0$ and $f(x_{k+1})-f(x_{k})\rightarrow
0$ as $k\rightarrow \infty $, $f\,^{\prime }(x_{k})\rightarrow 0$ as $%
k\rightarrow \infty $ by the Squeeze Theorem. Since $h$ is arbitrary, that
proves that 
\begin{equation}
\lim\limits_{x\rightarrow \infty }f\,^{\prime }(x)=0.  \label{3}
\end{equation}%
Now $\dfrac{d}{dc}\left( f(c)-cf\,^{\prime }(c)\right) =-cf\,^{\prime \prime
}(c)\leq 0$ for $c>0$, which implies that $f(c)-cf\,^{\prime }(c)$ is
non--increasing for $c>0$. Since $f$ is convex and $\lim\limits_{x%
\rightarrow \infty }f(x)=0$, $f$ must be eventually positive and
non--increasing, which implies that $f\,^{\prime }(c)\leq 0$ for large $c$.
Thus $f(c)-cf\,^{\prime }(c)$ is eventually positive and non--increasing,
which implies that $f(c)-cf\,^{\prime }(c)$ is bounded and monotonic on $%
[0,\infty )$. Using the integral form of Taylor's Remainder formula, we have 
$f(s)-T_{c}(s)=\dint\limits_{s}^{c}(t-s)f\,^{\prime \prime }(t)dt$, which
implies that 
\begin{equation}
f(s)-g_{s}(c)=\dint\limits_{s}^{c}(t-s)f\,^{\prime \prime }(t)dt.  \label{1}
\end{equation}%
Let $s=0$ and use integration by parts with $u=t$ and $dv=f\,^{\prime \prime
}(t)dt$ to obtain $\dint\limits_{0}^{c}tf\,^{\prime \prime }(t)dt=\left[
tf\,^{\prime }(t)\right] _{0}^{c}-\dint\limits_{0}^{c}f\,^{\prime
}(t)dt=cf\,^{\prime }(c)-f(c)+f(0)$. Since $f(c)-cf\,^{\prime }(c)+f(0)$ is
bounded and monotonic on $[0,\infty )$, the improper integral $%
\dint\limits_{0}^{\infty }tf\,^{\prime \prime }(t)dt$ converges. Let $%
G(u)=L\left\{ tf\,^{\prime \prime }(t)\right\} (u)=\dint\limits_{0}^{\infty
}e^{-ut}tf\,^{\prime \prime }(t)dt$, where $L$ denotes the Laplace
Transform. Then $G(0)=\dint\limits_{0}^{\infty }tf\,^{\prime \prime }(t)dt$.
Using well known formulas for the Laplace Transform, with $F(s)=L(f)$, $%
L\left( tf\,^{\prime \prime }(t)\right) =-\dfrac{d}{du}L\left( f\,^{\prime
\prime }(t)\right) =-\dfrac{d}{du}\left( u^{2}L(f)-uf(0)-f\,^{\prime
}(0)\right) =-u^{2}F^{\prime }(s)-2uF(s)-f(0)$, which implies that $%
G(0)=f(0) $. Hence 
\begin{equation}
\dint\limits_{0}^{\infty }tf\,^{\prime \prime }(t)dt=f(0).  \label{2}
\end{equation}%
Since $\dint\limits_{0}^{c}tf\,^{\prime \prime }(t)dt=f(0)-g_{0}(c)$ by (\ref%
{1}), $\lim\limits_{c\rightarrow \infty }\left( f(0)-g_{0}(c)\right) =f(0)$
by (\ref{2}), which implies that $\lim\limits_{c\rightarrow \infty
}g_{0}(c)=0$. Now 
\begin{equation*}
\lim\limits_{c\rightarrow \infty }g_{s}(c)=s\lim\limits_{c\rightarrow \infty
}f\,^{\prime }(c)+\lim\limits_{c\rightarrow \infty }\left( f(c)-cf\,^{\prime
}(c)\right) =s(0)+\lim\limits_{c\rightarrow \infty }g_{0}(c)=0.
\end{equation*}%
That proves (i) when $L(x)=0$. Now assume that $\lim\limits_{x\rightarrow
\infty }(f(x)-L(x))=0$ and let $w(x)=f(x)-L(x)$. Then $\lim\limits_{x%
\rightarrow \infty }w(x)=0$ and $\bar{T}_{c}=T_{c}-L=$ tangent line to $w$
at $(c,w(c))$. By what we just proved, $\lim\limits_{c\rightarrow \infty }%
\bar{T}_{c}(s)=0$, which implies that $\lim\limits_{c\rightarrow \infty
}\left( T_{c}(s)-L(s)\right) =0$.
\end{proof}

We now prove some theorems about the illumination index of functions convex
on the real line. For any convex function, $f$, it is trivial that if $%
P=(s,t)$ lies above the graph of $f$, then $I_{f}(P)=0$. Thus we do not
bother stating that case in any of the theorems below.

\begin{theorem}
\label{T2}Suppose that $f\,^{\prime \prime }(x)\geq 0$ on $\Re $ and that $%
y=L_{1}(x)$ and $y=L_{2}(x)$ are distinct oblique asymptotes of $f$. Let $%
P=(s,t)\in G_{f}^{C}$ be given.

(i) If $\max \left( L_{1}(s),L_{2}(s)\right) <t<f(s)$, then $I_{f}(P)=2$.

(ii) If $L_{1}(s)\neq L_{2}(s)$\textbf{\ }$\ $and $\min \left(
L_{1}(s),L_{2}(s)\right) <t\leq \max \left( L_{1}(s),L_{2}(s)\right) $, then 
$I_{f}(P)=1$.

(iii) If $t\leq \min \left( L_{1}(s),L_{2}(s)\right) $, then $I_{f}(P)=0$.
\end{theorem}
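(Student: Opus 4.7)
The plan is to read off the shape of $c\mapsto g_s(c)$ from Lemmas \ref{L1} and \ref{L3}, then count roots of $g_s(c)=t$ in each case and convert via (\ref{4}) into counts of tangent lines through $P$.

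First I would observe that two distinct oblique asymptotes cannot both occur at the same infinity: if both satisfied $\lim_{x\to+\infty}(f-L_i)=0$, then $L_1-L_2=(f-L_2)-(f-L_1)\to 0$ at $+\infty$, forcing $L_1\equiv L_2$. So after relabeling, $L_1$ is the asymptote at $-\infty$ and $L_2$ the asymptote at $+\infty$. Lemma \ref{L3} then gives $\lim_{c\to-\infty} g_s(c)=L_1(s)$ and $\lim_{c\to+\infty} g_s(c)=L_2(s)$, while Lemma \ref{L1}(i) together with $g_s(s)=f(s)$ shows that on $I_1$ the function $g_s$ rises (weakly) from $L_1(s)$ up to $f(s)$, and on $I_2$ it falls (weakly) from $f(s)$ back down to $L_2(s)$. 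A standard convexity-plus-asymptote argument (if $(f-L)(x_0)<0$ then convexity keeps $f-L$ bounded above by that negative value for all $x>x_0$, contradicting the limit $0$) yields $f\geq L_1$ and $f\geq L_2$ throughout $\Re$.

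For part (i), the strict inequalities $\max(L_1(s),L_2(s))<t<f(s)$ place $t$ in the image of $g_s$ on each of $I_1$ and $I_2$; the Intermediate Value Theorem furnishes $c_1\in I_1$ and $c_2\in I_2$ with $g_s(c_i)=t$, producing two tangent lines through $P$. To check distinctness, suppose $T_{c_1}=T_{c_2}$; convexity forces $f$ linear on $[c_1,c_2]$, and since $s\in(c_1,c_2)$ this common tangent equals $f$ on that interval, giving $t=T_{c_1}(s)=f(s)$, a contradiction. Lemma \ref{L2} caps $I_f(P)\leq 2$, so $I_f(P)=2$. For parts (ii) and (iii), I argue similarly. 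In (ii), after a further relabeling assume $L_1(s)<L_2(s)$, so $L_1(s)<t\leq L_2(s)<f(s)$ (the last inequality holding since $P\notin G_f$). The IVT on $I_1$ produces one tangent line, while on $I_2$ the bound $g_s(c)\geq L_2(s)\geq t$ precludes a tangent (equality could arise only when $f$ coincides with $L_2$ on a tail, by Lemma \ref{L1}(ii)(B)). In (iii), $g_s(c)\geq L_1(s)\geq t$ on $I_1$ and $g_s(c)\geq L_2(s)\geq t$ on $I_2$, so $g_s$ never reaches $t$ and $I_f(P)=0$.

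The main obstacle is the boundary-value bookkeeping when $t$ equals $L_1(s)$ or $L_2(s)$: then $g_s$ could attain its infimum on an unbounded tail, and Lemma \ref{L1}(ii)(B) would force $f$ to coincide with $L_1$ or $L_2$ on that tail, making the asymptote itself a tangent line through $P$ and inflating the count. To preserve the stated equalities one must either invoke that $f$ is not linear on any subinterval (so Lemma \ref{mult} applies) or rule out these asymptote-as-tangent configurations by hand using $L_1\neq L_2$ together with $P\notin G_f$.
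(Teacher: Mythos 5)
Your route is essentially the paper's: read off the shape of $c\mapsto g_{s}(c)$ from Lemmas \ref{L1} and \ref{L3}, locate roots of $g_{s}-t$ on $I_{1}$ and $I_{2}$ by the Intermediate Value Theorem, and cap the count. In the generic cases your write--up is if anything more careful than the paper's: you justify that two distinct oblique asymptotes must sit at opposite infinities, you prove $f\geq L_{i}$, and in (i) you verify that the tangents produced from $I_{1}$ and from $I_{2}$ are distinct lines, a point the paper simply asserts.

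The obstacle you flag at the end is genuine, and you should be aware that only the first of your two proposed repairs can succeed. If $f$ is allowed to coincide with an asymptote on a ray, the equalities in (ii) and (iii) actually fail, so no bookkeeping with $L_{1}\neq L_{2}$ and $P\notin G_{f}$ will rescue them. Concretely, let $f\,^{\prime \prime }(x)=x^{2}e^{x}$ for $x<0$ and $f\,^{\prime \prime }(x)=0$ for $x\geq 0$, with $f(0)=f\,^{\prime }(0)=0$, and then add $x$ to $f$ so that both asymptotes are genuinely oblique: one checks that $f\equiv L_{2}$ on $[0,\infty )$ with $L_{2}(x)=x$, while $L_{1}(x)=-x-6$ at $-\infty $. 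At $s=-1$, $t=L_{2}(-1)=-1=\max \left( L_{1}(s),L_{2}(s)\right) $ one is in case (ii), yet the line $L_{2}$ is itself tangent to $f$ at every $c\geq 0$ and passes through $P$, and the IVT on $I_{1}$ supplies a second, non--parallel tangent, so $I_{f}(P)=2$. (This is precisely where the paper's own proof of (ii) slips: it asserts $g_{s}(c)\neq t$ on $I_{2}$ from monotonicity plus the limit, which fails when $g_{s}$ attains its limiting value; its boundary case of (iii) likewise invokes the conclusion $g_{s}(c)=f(s)$ of Lemma \ref{L1}(ii), which is only available when $s\in I$.) To make your argument --- and the statement --- airtight, either add the hypothesis that $f$ is not linear on any subinterval, so that $g_{s}$ is strictly monotone on each $I_{j}$ and never attains its limiting values, or weaken (ii) and (iii) to the strict inequalities $t<\max \left( L_{1}(s),L_{2}(s)\right) $ and $t<\min \left( L_{1}(s),L_{2}(s)\right) $. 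With either amendment your proof goes through as written.
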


\begin{proof}
Without loss of generality we can assume that $\lim\limits_{x\rightarrow
\,-\infty }(f(x)-L_{1}(x))=0$ and $\lim\limits_{x\rightarrow \infty
}(f(x)-L_{2}(x))=0$. Then by Lemma \ref{L3}, $\lim\limits_{c\rightarrow
\,-\infty }g_{s}(c)=L_{1}(s)$ and $\lim\limits_{c\rightarrow \infty
}g_{s}(c)=L_{2}(s)$. We prove the theorem for the case when $L_{1}(s)\leq
L_{2}(s)$, the proof when $L_{2}(s)\leq L_{1}(s)$ being similar. Thus we
have 
\begin{eqnarray*}
\lim\limits_{c\rightarrow \,-\infty }g_{s}(c) &=&\min \left(
L_{1}(s),L_{2}(s)\right) =L_{1}(s), \\
\lim\limits_{c\rightarrow \infty }g_{s}(c) &=&\max \left(
L_{1}(s),L_{2}(s)\right) =L_{2}(s).
\end{eqnarray*}

To prove (i): $\lim\limits_{c\rightarrow \,-\infty
}g_{s}(c)=L_{1}(s)<t,g_{s}(s)=f(s)>t$, and $\lim\limits_{c\rightarrow \infty
}g_{s}(c)=L_{2}(s)<t$. That implies that $g_{s}-t$ has at least two real
roots, $c_{1}\in I_{1}$ and $c_{2}\in I_{2}$, by the Intermediate Value
Theorem. Note that $g_{s}(s)=f(s)\neq t$, so that $c=s$ is not a root of $%
g_{s}-t$. Either $c_{1}$ is the only root of $g_{s}-t$ in $I_{1}$, or $%
g_{s}(c)=t$ for all $c$ in some interval, $I$, contained in $I_{1}$ by Lemma %
\ref{L1}(ii). In the latter case, $T_{c}(x)=f(x)$ for all $c,x\in I$, so
that there is only one tangent line for all $c\in I$. In either case, that
yields one tangent line from $I_{1}$ which passes thru $P$. The same holds
for $I_{2}$ by Lemma \ref{L1}(ii).Thus there are precisely two distinct
tangent lines to $f$ which pass thru $P$, which implies that $I_{f}(P)=2$.

To prove (ii): It follows easily, as in the proof of Lemma \ref{L3}, that $%
L_{2}(s)\leq f(s)$, which implies that $t<f(s)$ since $(s,t)\in G_{f}^{C}$.
Thus $g_{s}(s)\neq t$, so again $c=s$ is not a root of $g_{s}-t$. Note also
that $g_{s}(c)\neq t$ for any $c\in I_{2}$ since $g_{s}$ is non--increasing
on $(s,\infty )$, $t\leq L_{2}(s)$, and $\lim\limits_{c\rightarrow \infty
}g_{s}(c)=L_{2}(s)$. Since $\lim\limits_{c\rightarrow \,-\infty
}g_{s}(c)=L_{1}(s)<t$\ and $g_{s}(s)=f(s)>t$, $g_{s}-t$ has at least one
real root, $c_{0}\in I_{1}$. Either $c_{0}$ is the only root of $g_{s}-t$ in 
$I_{1}$, or $g_{s}(c)=t$ for all $c$ in some interval, $I$, contained in $%
I_{1}$ by Lemma \ref{L1}(ii). In the latter case, $T_{c}(x)=f(x)$ for all $%
c,x\in I$, so that there is only one tangent line for all $c\in I$. In
either case, that yields one tangent line from $I_{1}$ which passes thru $P$%
, which implies that $I_{f}(P)=1$.

To prove (iii): If $t<L_{1}(s)$, then it follows easily that $g_{s}(c)=t$
has no solution. If $t=L_{1}(s)$ and $g_{s}(c)=t$, then $g_{s}(c)=t$ for all 
$c\in I=(-\infty ,k)$ for some $k<s$. Arguing as in the proof of Lemma \ref%
{L1}(ii), it follows easily that $f(x)=L_{1}(x)$ for all $x\in I$ , which
implies that $g_{s}(c)=f(s)$ and thus $f(s)=t$, which contradicts the
assumption that $(s,t)\in G_{f}^{C}$. Hence $I_{f}(P)=0$.
\end{proof}

\begin{example}
Let $f(x)=x\tan ^{-1}x$. Then $f\,^{\prime \prime }(x)=\allowbreak \dfrac{2}{%
\left( 1+x^{2}\right) ^{2}}>0$ on $\Re $, and $y=\pm \dfrac{\pi }{2}x-1$are
distinct oblique asymptotes of $f$. Thus Theorem \ref{T2} applies with $%
L_{1}(x)=-\dfrac{\pi }{2}x-1$and $L_{2}(x)=\dfrac{\pi }{2}x-1$. 
\begin{eqnarray*}
\min \left( L_{1}(s),L_{2}(s)\right) &=&\left\{ 
\begin{array}{ll}
\dfrac{\pi }{2}s-1 & \text{if }s<0 \\ 
-\dfrac{\pi }{2}s-1 & \text{if }s\geq 0%
\end{array}%
\right. \\
\max \left( L_{1}(s),L_{2}(s)\right) &=&\left\{ 
\begin{array}{ll}
-\dfrac{\pi }{2}s-1 & \text{if }s<0 \\ 
\dfrac{\pi }{2}s-1 & \text{if }s\geq 0%
\end{array}%
\right.
\end{eqnarray*}

Let $P=(s,t)$. If $s<0$ and $-\dfrac{\pi }{2}s-1<t<s\tan ^{-1}s$, or $s\geq
0 $ and $\dfrac{\pi }{2}s-1<t<s\tan ^{-1}s$, then $I_{f}(P)=2$.

If $s<0$ and $\dfrac{\pi }{2}s-1<t\leq -\dfrac{\pi }{2}s-1$, or $s>0$ and $-%
\dfrac{\pi }{2}s-1<t\leq \dfrac{\pi }{2}s-1$, then $I_{f}(P)=1$.

Finally, if $s<0$ and $t\leq \dfrac{\pi }{2}s-1$, \ or $s\geq 0$ and $t\leq -%
\dfrac{\pi }{2}s-1$, then $I_{f}(P)=0$.
\end{example}

Before proving our next result, we need the following lemma.

\begin{lemma}
\label{L4}Suppose that $f\,^{\prime \prime }(x)\geq 0$ for $\left\vert
x\right\vert >b$, where $b$ is a positive real number. Let $%
g_{s}(c)=f(c)+(s-c)f\,^{\prime }(c)$ for given $s\in \Re $.

(i) If $\lim\limits_{x\rightarrow \,-\infty }\left( xf\,^{\prime \prime
}(x)\right) =A$, where $-\infty \leq A<0$, then $\lim\limits_{c\rightarrow
\,-\infty }g_{s}(c)=-\infty $

(ii) If $\lim\limits_{x\rightarrow \infty }\left( xf\,^{\prime \prime
}(x)\right) =A$, where $0<A\leq \infty $, then $\lim\limits_{c\rightarrow
\infty }g_{s}(c)=-\infty $
\end{lemma}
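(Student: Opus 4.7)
The plan is to use the integral form of Taylor's remainder that appears in equation (2.1) of the proof of Lemma \ref{L3}:
\begin{equation*}
g_{s}(c) \;=\; f(s)-\int_{s}^{c}(t-s)\,f\,^{\prime\prime}(t)\,dt.
\end{equation*}
With this identity, both parts reduce to showing that the integrand $(t-s)f\,^{\prime\prime}(t)$ has the correct sign and is bounded away from zero at infinity, which forces the integral to diverge and thus drives $g_{s}(c)$ to $-\infty$.

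For part (ii), I would factor $(t-s)f\,^{\prime\prime}(t)=\bigl(1-s/t\bigr)\cdot tf\,^{\prime\prime}(t)$. Since $tf\,^{\prime\prime}(t)\to A\in(0,\infty]$ and $1-s/t\to 1$, for any fixed $s$ one can choose $M>\max(b,|s|)$ so that $tf\,^{\prime\prime}(t)\ge \min(A/2,1)$ and $1-s/t\ge 1/2$ whenever $t\ge M$. Then $(t-s)f\,^{\prime\prime}(t)\ge \delta$ for some $\delta>0$ on $[M,\infty)$, and splitting
\begin{equation*}
\int_{s}^{c}(t-s)f\,^{\prime\prime}(t)\,dt \;=\; \int_{s}^{M}(t-s)f\,^{\prime\prime}(t)\,dt+\int_{M}^{c}(t-s)f\,^{\prime\prime}(t)\,dt
\end{equation*}
shows that the right-hand side exceeds a constant plus $\delta(c-M)$, which tends to $+\infty$ as $c\to\infty$. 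Hence $g_{s}(c)\to -\infty$.

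Part (i) is treated symmetrically. Rewriting $g_{s}(c)=f(s)+\int_{c}^{s}(t-s)f\,^{\prime\prime}(t)\,dt$ and factoring as above, the hypothesis $tf\,^{\prime\prime}(t)\to A<0$ together with $1-s/t\to 1>0$ shows that for $t$ sufficiently negative one has $(t-s)f\,^{\prime\prime}(t)\le -\delta$ for some $\delta>0$. Since the length $s-c$ of the interval of integration tends to infinity as $c\to -\infty$, the integral $\int_{c}^{s}(t-s)f\,^{\prime\prime}(t)\,dt$ tends to $-\infty$, and therefore $g_{s}(c)\to -\infty$.

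The main obstacle, as I see it, is to treat the finite and infinite values of $A$ under a single argument; this is resolved by the uniform lower (or upper) bound $tf\,^{\prime\prime}(t)\ge \min(A/2,1)$ (respectively $tf\,^{\prime\prime}(t)\le \max(A/2,-1)$), which is valid for both finite and infinite $A$. A secondary technicality is that $f\,^{\prime\prime}$ is only assumed nonnegative for $|t|>b$, but since $b$ is fixed the contribution of the integral over the compact piece inside $[-b,b]$ is an additive constant that does not affect the divergence of the tails.
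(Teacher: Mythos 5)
Your proof is correct and is essentially the paper's own argument in integrated form: the paper computes $g_{s}^{\prime }(c)=(s-c)f\,^{\prime \prime }(c)$ and shows it is eventually negative and bounded away from $0$ (using the same splitting of $(s-c)f\,^{\prime \prime }(c)$ into $sf\,^{\prime \prime }(c)$ and $-cf\,^{\prime \prime }(c)$ that underlies your factorization $(t-s)f\,^{\prime \prime }(t)=(1-s/t)\,tf\,^{\prime \prime }(t)$), whence $g_{s}(c)\rightarrow -\infty $. Your version simply integrates that same estimate via the Taylor remainder identity $f(s)-g_{s}(c)=\int_{s}^{c}(t-s)f\,^{\prime \prime }(t)\,dt$ from the proof of Lemma \ref{L3}, and if anything is more explicit than the paper's sequence-based case analysis.
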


\begin{remark}
A weaker version of this lemma was given in (\cite{H}, Lemma 1) where it was
assumed that $f\,^{\prime \prime }(x)\geq m>0$ for $\left\vert x\right\vert
>b$, where $m$ and $b$ are positive real numbers.
\end{remark}

\begin{proof}
We prove (ii), the proof of (i) being similar. Let $\{x_{k}\}\subset
(b,\infty )$ be any sequence with $x_{k}\rightarrow \infty $. Suppose that $%
\lim\limits_{k\rightarrow \infty }f\,^{\prime \prime }\left( x_{k}\right)
=m>0$. Then $\lim\limits_{k\rightarrow \infty }\left[ (s-x_{k})f\,^{\prime
\prime }\left( x_{k}\right) \right] =-\infty \neq 0$. Second, suppose that $%
\lim\limits_{k\rightarrow \infty }f\,^{\prime \prime }\left( x_{k}\right) =0$%
. Then $\lim\limits_{k\rightarrow \infty }\left[ (s-x_{k})f\,^{\prime \prime
}\left( x_{k}\right) \right] =-\lim\limits_{k\rightarrow \infty }\left[
x_{k}f\,^{\prime \prime }\left( x_{k}\right) \right] \neq 0$ by (ii). Hence $%
\lim\limits_{x\rightarrow \infty }\left[ (s-x)f\,^{\prime \prime }(x)\right]
\neq 0$, which implies that $\lim\limits_{c\rightarrow \infty }g_{s}^{\prime
}(c)=\lim\limits_{c\rightarrow \infty }\left[ (s-c)f\,^{\prime \prime }(c)%
\right] \neq 0$. Since $f\,^{\prime \prime }(x)\geq 0$ for $x>b$, $g_{s}(c)$
is eventually decreasing. Since $\lim\limits_{c\rightarrow \infty
}g_{s}^{\prime }(c)\neq 0$, it follows that $\lim\limits_{c\rightarrow
\infty }g_{s}(c)=-\infty $.
\end{proof}

The following theorem is similar to Theorem \ref{T2} for the case when $f$
has only one oblique asymptote.

\begin{theorem}
\label{T3}Suppose that $f\,^{\prime \prime }(x)\geq 0$ on $\Re $ and that
one of the following two conditions holds, where $L$ is a linear function.

$\lim\limits_{x\rightarrow \,-\infty }\left( xf\,^{\prime \prime }(x)\right)
=A$, where $-\infty \leq A<0$ and $\lim\limits_{x\rightarrow \infty
}(f(x)-L(x))=0$, or $\lim\limits_{x\rightarrow \,-\infty }(f(x)-L(x))=0$ and 
$\lim\limits_{x\rightarrow \infty }\left( xf\,^{\prime \prime }(x)\right) =A$%
, where $0<A\leq \infty $. Let $P=(s,t)\in G_{f}^{C}$ be given.

(i) If $L(s)<t<f(s)$, then $I_{f}(P)=2$

(ii) If $t\leq L(s)$, then $I_{f}(P)=1$
\end{theorem}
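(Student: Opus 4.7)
The plan is to mimic the structure of Theorem~\ref{T2}, using Lemma~\ref{L4} on whichever side carries the second--derivative hypothesis and Lemma~\ref{L3} on the side with the oblique asymptote. By symmetry (apply the argument to $\tilde f(x)=f(-x)$) it suffices to treat the case $\lim_{x\to -\infty}(f(x)-L(x))=0$ and $\lim_{x\to\infty}xf''(x)=A\in(0,\infty]$. Under this assumption Lemma~\ref{L3}(ii) gives $\lim_{c\to -\infty}g_s(c)=L(s)$, Lemma~\ref{L4}(ii) gives $\lim_{c\to\infty}g_s(c)=-\infty$, and Lemma~\ref{L1}(i) says $g_s$ is non--decreasing on $I_1$ and non--increasing on $I_2$, with $g_s(s)=f(s)$ at the peak. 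The graph of $g_s$ thus rises from $L(s)$ to $f(s)$ on $I_1$ and then falls from $f(s)$ down to $-\infty$ on $I_2$.

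For part (i), given $L(s)<t<f(s)$, the intermediate value theorem yields a root $c_1\in I_1$ of $g_s-t$ (since $g_s$ moves from $L(s)<t$ up to $f(s)>t$) and a root $c_2\in I_2$ (since $g_s$ moves from $f(s)>t$ down through $-\infty$). By Lemma~\ref{L1}(ii) each of $I_1,I_2$ contributes exactly one distinct tangent through $P$, the degenerate constant--on--an--interval sub--case of L1(ii) collapsing into a single tangent. These two tangents are distinct: if the same line were tangent to $f$ at both $(c_1,f(c_1))$ and $(c_2,f(c_2))$, then $f$ would be linear on $[c_1,c_2]\ni s$, forcing $g_s(c_1)=f(s)$, contradicting $g_s(c_1)=t\ne f(s)$. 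Combined with the upper bound $I_f(P)\le 2$ from Lemma~\ref{L2}, this yields $I_f(P)=2$.

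For part (ii), $t\le L(s)$, I first need $L(s)\le f(s)$. This follows because $h:=f-L$ is convex with $\lim_{x\to -\infty}h(x)=0$; applying the convexity inequality to $a_1<a_2<s$ gives $h(s)\ge\tfrac{s-a_1}{a_2-a_1}h(a_2)-\tfrac{s-a_2}{a_2-a_1}h(a_1)$, and letting $a_1\to -\infty$ followed by $a_2\to -\infty$ produces $h(s)\ge 0$. Since $P\in G_f^C$ this upgrades to $t<f(s)$, so $g_s(s)>t$ and the IVT produces a root of $g_s-t$ in $I_2$; by Lemma~\ref{L1}(ii) this contributes exactly one tangent through $P$. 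On $I_1$, monotonicity forces $g_s(c)\ge L(s)\ge t$, so a root can only occur in the degenerate sub--case $t=L(s)$ with $g_s$ constant on a left interval; arguing exactly as in the proof of Theorem~\ref{T2}(iii), this forces $f\equiv L$ on that interval and eventually contradicts $P\in G_f^C$. Hence $I_f(P)=1$. The main obstacle, as in Theorem~\ref{T2}(iii), is precisely this boundary case $t=L(s)$, which must be excluded carefully so that the tangent arising from a possible linear piece of $f$ is not over--counted.
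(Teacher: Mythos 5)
Your proposal follows the paper's own route: reduce to one of the two symmetric cases, apply Lemma \ref{L3} on the asymptote side and Lemma \ref{L4} on the other side to obtain $\lim_{c\rightarrow -\infty }g_{s}(c)=L(s)$ and $\lim_{c\rightarrow \infty }g_{s}(c)=-\infty $, and then combine the Intermediate Value Theorem with Lemma \ref{L1}(ii) and the upper bound of Lemma \ref{L2}, exactly as the paper does by deferring to the proof of Theorem \ref{T2}. Your part (i) is correct, and is in fact slightly more careful than the paper's in that you verify explicitly (via the argument of Lemma \ref{mult}) that the tangents produced from $I_{1}$ and $I_{2}$ cannot coincide; your derivation of $L(s)\leq f(s)$ from convexity of $f-L$ is a legitimate substitute for the paper's appeal to ``as in the proof of Lemma \ref{L3}.''

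The gap is in the boundary case $t=L(s)$ of part (ii). You assert that if $g_{s}\equiv t$ on some left half-line then $f\equiv L$ there and that this ``eventually contradicts $P\in G_{f}^{C}$.'' It does not. If $f\equiv L$ on $(-\infty ,c_{0}]$ with $c_{0}<s$, then for $c\leq c_{0}$ the tangent $T_{c}$ is the line $y=L(x)$ itself, so $g_{s}(c)=T_{c}(s)=L(s)=t$; since $s\notin (-\infty ,c_{0}]$ this says nothing about $f(s)$, and no contradiction with $P\in G_{f}^{C}$ arises. In that situation $y=L(x)$ is a genuine tangent line through $P=(s,L(s))$, and together with the tangent coming from the other side one obtains two distinct tangents, so the asserted conclusion $I_{f}(P)=1$ actually fails there: take for instance $f=0$ on $(-\infty ,0]$ and $f\,^{\prime \prime }(x)=x/(1+x^{2})$ for $x>0$, with $L=0$ and $P=(1,0)$; then $y=0$ and the tangent at the root $c_{2}>1$ of $g_{1}$ give two distinct tangents through $P$. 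To be fair, the paper's own proof, which defers to Theorem \ref{T2}(ii)--(iii), silently skips exactly the same boundary case (its claim that $g_{s}(c)=f(s)$ there uses $s\in I$, which is false), so you have faithfully reproduced the published argument together with its weak point. A clean repair is either to restrict (ii) to $t<L(s)$, or to add the hypothesis that $f$ is not linear on any half-line (equivalently $f>L$ everywhere), under which your monotonicity argument does show $g_{s}>t$ on the asymptote side and the count $I_{f}(P)=1$ goes through.
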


\begin{proof}
We prove the case when $\lim\limits_{x\rightarrow \,-\infty }\left(
xf\,^{\prime \prime }(x)\right) =A$, $-\infty \leq A<0$ and $%
\lim\limits_{x\rightarrow \infty }(f(x)-L(x))=0$, the proof of the other
case being similar. By Lemma \ref{L4}, $\lim\limits_{c\rightarrow \,-\infty
}g_{s}(c)=-\infty $, and by Lemma \ref{L3}, $\lim\limits_{c\rightarrow
\infty }g_{s}(c)=L(s)$. If $L(s)<t<f(s)$, then $\lim\limits_{c\rightarrow
\,-\infty }g_{s}(c)<t$, $g_{s}(s)=f(s)>t$, and $\lim\limits_{c\rightarrow
\infty }g_{s}(c)<t$. That implies that $g_{s}-t$ has at least two real
roots, $c_{1}\in I_{1}$ and $c_{2}\in I_{2}$, by the Intermediate Value
Theorem. Arguing exactly as in the proof of Theorem \ref{T2}, part (i), it
follows that exactly two tangent lines pass thru $P$, which implies that $%
I_{f}(P)=2$. That proves (i). It follows easily, as in the proof of Lemma %
\ref{L3}, that $L(s)\leq f(s)$. If $t\leq L(s)$, then$\lim\limits_{c%
\rightarrow \,-\infty }g_{s}(c)<t$ and $g_{s}(s)=f(s)>t$ implies that $%
g_{s}-t$ has at least one real root, $c_{0}\in I_{1}$. Arguing exactly as in
the proof of Theorem \ref{T2}, part (ii), it follows that $I_{f}(P)=1$.
\end{proof}

\begin{remark}
It is possible to prove Theorem \ref{T3} with slightly weaker hypotheses.
However, we believe, but have not been able to prove, that the conclusion of
Theorem \ref{T3} holds with only the assumption that $f$ has one oblique
asymptote.
\end{remark}

\begin{example}
Let $f(x)=e^{x}$. Then$\lim\limits_{x\rightarrow \,-\infty }f(x)=0$ and $%
\lim\limits_{x\rightarrow \infty }\left( xf\,^{\prime \prime }(x)\right)
\neq 0$, so that Theorem \ref{T3} applies with $L(x)=0$. Hence,if $P=(s,t)$
with $0<t<e^{s}$, then $I_{f}(P)=2$. If $P=(s,t)$ with $t\leq 0$, then $%
I_{f}(P)=1$.
\end{example}

\begin{theorem}
\label{T4}Suppose that $f\,^{\prime \prime }(x)\geq 0$ on $\Re $ and that $%
\lim\limits_{x\rightarrow \,-\infty }\left( xf\,^{\prime \prime }(x)\right)
=A$, where $-\infty \leq A<0$ and $\lim\limits_{x\rightarrow \infty }\left(
xf\,^{\prime \prime }(x)\right) =A$, where $0<A\leq \infty $. Then $%
I_{f}(P)=2$ for any point $P=(s,t)$ below the graph of $f$.
\end{theorem}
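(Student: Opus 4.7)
The proof plan is to combine Lemma \ref{L4} with the now-familiar ``IVT on $g_s$'' argument used in Theorem \ref{T2}(i). The two hypotheses on $xf\,^{\prime\prime}(x)$ at $\pm\infty$ are precisely what let us invoke both parts of Lemma \ref{L4}, so that for any fixed $s\in\Re$ we will have
\begin{equation*}
\lim_{c\to-\infty} g_s(c) = -\infty \qquad\text{and}\qquad \lim_{c\to\infty} g_s(c) = -\infty .
\end{equation*}
The key observation is then simply that $g_s(s) = f(s) + (s-s)f\,^{\prime}(s) = f(s)$, so that for any point $P=(s,t)$ below the graph of $f$ we have $g_s(s) = f(s) > t$ while $g_s$ tends to $-\infty$ on both sides.

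Next I would apply the Intermediate Value Theorem separately on $I_1=(-\infty,s)$ and on $I_2=(s,\infty)$: on each interval $g_s$ drops from $f(s)>t$ down to $-\infty$, so $g_s-t$ has at least one root $c_1\in I_1$ and at least one root $c_2\in I_2$. By the equivalence $g_s(c_0)=t\iff T_{c_0}(s)=t$, each such root corresponds to a tangent line passing through $P$. Using Lemma \ref{L1}(ii), on each $I_j$ the solution set of $g_s(c)=t$ is either a single point or an entire subinterval on which $f$ is linear; in the second case, all those $c$ values define the same tangent line. Thus each of $I_1$ and $I_2$ contributes exactly one tangent line through $P$.

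Finally, to rule out a third tangent I would cite Lemma \ref{L2}, which guarantees that at most two distinct tangent lines to $f$ can pass through any point of the plane. Combining the lower bound of two from the IVT argument with this upper bound gives $I_f(P)=2$, as required.

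The argument is essentially a direct transplant of the proof of Theorem \ref{T2}(i), with Lemma \ref{L4} supplying the limits at both ends in place of Lemma \ref{L3}; accordingly, I do not expect any genuine obstacle in the write-up. The only subtlety worth flagging is the ``multiple solutions yielding a single tangent'' case in Lemma \ref{L1}(ii), which must be handled explicitly to be sure that the two roots in $I_1$ and $I_2$ really do produce two \emph{distinct} tangent lines (they do, since a root in $I_1$ lies strictly to the left of $s$ and a root in $I_2$ strictly to the right, so they cannot lie in a common subinterval on which $f$ is linear).
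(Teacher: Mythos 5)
Your proposal is correct and follows exactly the route the paper intends: the published proof simply says the result ``follows from Lemma \ref{L4} as in the proof of Theorem \ref{T3}'' and omits the details, and what you have written is precisely those details (Lemma \ref{L4} giving $\lim_{c\to\pm\infty}g_{s}(c)=-\infty$, the Intermediate Value Theorem on each of $I_{1}$ and $I_{2}$, and Lemma \ref{L1}(ii) to collapse any interval of solutions to a single tangent line). Your extra citation of Lemma \ref{L2} for the upper bound is harmless and consistent with the paper's framework.
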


\begin{proof}
The proof follows from Lemma \ref{L4} as in the proof of Theorem \ref{T3}
and we omit the details.
\end{proof}

\qquad We now apply Theorem \ref{T4} to show that any point below the graph
of a convex rational function or exponential polynomial must have
illumination index equal to $2$.

\begin{proposition}
\label{P1}\textbf{\ }Let $R$ be a rational function defined on $\Re $ with $%
R^{\prime \prime }\geq 0$ on $\Re $. Then $I_{R}(P)=2$ for any point $P$
below the graph of $R$.
\end{proposition}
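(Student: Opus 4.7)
The plan is to apply Theorem \ref{T4}, whose hypothesis reduces the problem to verifying
\begin{equation*}
\lim_{x\to -\infty}(xR^{\prime\prime}(x))\in [-\infty,0)\quad\text{and}\quad \lim_{x\to +\infty}(xR^{\prime\prime}(x))\in (0,+\infty].
\end{equation*}
Because $R$ is rational and defined on all of $\Re$, its denominator (in lowest terms) has no real roots, and $R^{\prime\prime}$ is itself a rational function, so all the relevant limits at $\pm\infty$ exist in the extended sense. The proof therefore reduces to an analysis of leading--order asymptotics.

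First I would write $R^{\prime\prime}=A/B$ in lowest terms with $B>0$ on $\Re$. The condition $R^{\prime\prime}\ge 0$ on $\Re$ forces $A\ge 0$ on $\Re$, so $A$ has positive leading coefficient, and $\deg A-\deg B$ must be even (otherwise $R^{\prime\prime}$ would have opposite signs at $\pm\infty$). Thus $R^{\prime\prime}(x)\sim c\,x^{2k}$ as $|x|\to\infty$ for some $c>0$ and some integer $k$. If $k\ge 0$, then $xR^{\prime\prime}(x)\sim c\,x^{2k+1}$ has odd positive degree, so $\lim_{x\to +\infty}(xR^{\prime\prime}(x))=+\infty$ and $\lim_{x\to -\infty}(xR^{\prime\prime}(x))=-\infty$, and Theorem \ref{T4} applies at once.

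The main obstacle is the remaining case $k<0$, where $R^{\prime\prime}=O(1/x^{2})$ and $xR^{\prime\prime}(x)\to 0$ at both ends, so Theorem \ref{T4} does not directly apply. I would show that this case forces $R$ to be linear, and hence can be excluded from consideration (for linear $R$ the only tangent line is the graph itself, so $I_{R}(P)=0$ for every $P\notin G_{R}$, and the proposition is implicitly about non--linear $R$). To handle it, decompose $R=q+p/Q$ by polynomial division with $\deg p<\deg Q$. The condition $R^{\prime\prime}\to 0$ forces $\deg q\le 1$, so $R-q=p/Q$ is a convex rational function with $\lim_{x\to\pm\infty}(R-q)=0$. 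A convex function on $\Re$ with limit $0$ at both ends is non--positive on $\Re$ (by comparing to secants and letting the endpoints tend to $\pm\infty$), so $q-R\ge 0$; then $q-R$ is a non--negative concave function on $\Re$ with zero limits at $\pm\infty$, and its derivative is non--increasing with limit $0$ at both ends, hence identically zero. Therefore $q-R\equiv 0$, i.e., $R=q$ is linear, a contradiction. Combined with the previous paragraph, Theorem \ref{T4} yields $I_{R}(P)=2$ for every $P$ below the graph of $R$.
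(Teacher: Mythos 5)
Your proof is correct and reduces the statement to Theorem \ref{T4} exactly as the paper does, but you verify the hypothesis $\lim_{\left\vert x\right\vert \to \infty }\left( xR^{\prime \prime }(x)\right) \neq 0$ by a genuinely different route. The paper writes $R=p/q$ with $\deg p=m$, $\deg q=n$, rules out $m\leq n$ and $m=n+1$ via the asymptote argument, expands $q^{3}R^{\prime \prime }$ explicitly to extract the leading term $(m-n)(m-n-1)a_{m}b_{n}^{2}x^{2n+m-2}$, and then runs a parity argument ($n$ even because $q$ has no real roots, $m$ even because $R^{\prime \prime }\geq 0$ near $\pm \infty $) to conclude $xR^{\prime \prime }(x)\sim Cx^{m-n-1}$ with $C>0$ and $m-n-1$ odd and positive. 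You instead put $R^{\prime \prime }$ itself in lowest terms, note that nonnegativity forces $R^{\prime \prime }(x)\sim cx^{2k}$ with $c>0$, and split on the sign of $k$: for $k\geq 0$ Theorem \ref{T4} applies at once, while for $k<0$ you show by a soft convexity argument (a convex function on $\Re $ tending to $0$ at both ends is identically $0$) that $R$ would have to be linear. Your version avoids the explicit leading-coefficient computation and is arguably cleaner; the paper's version yields more explicit information (the exact exponent $m-n-1$). Both arguments share the same blemish: as literally stated the proposition fails for linear $R$ (the only tangent line is the graph itself, so $I_{R}(P)=0$ for $P$ below the graph), and both proofs tacitly assume $R$ is nonlinear --- you at least flag this explicitly, which is a point in your favor.
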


\begin{proof}
Write $R(x)=\dfrac{p(x)}{q(x)}$, where $p$ and $q$ are polynomials of degree 
$m$ and $n$ respectively. If $n\geq m$, then $R$ has a horizontal asymptote
and thus $R^{\prime \prime }$ cannot be non--negative on $\Re $. Thus we
have $n<m$. If $m=n+1$, then $R$ has one oblique asymptote, $L$, which
implies that $\lim\limits_{x\rightarrow \pm \infty }(R(x)-L(x))=0$ and again 
$R^{\prime \prime }$ would not be non--negative on $\Re $. Thus $m-n-1\neq 0$%
. Also, since $R$ is defined on $\Re $, $n$ must be even. Let $%
p(x)=\sum\limits_{k=0}^{m}a_{k}x^{k}$, $q(x)=\sum%
\limits_{k=0}^{n}b_{k}x^{k},a_{m}\neq 0\neq b_{n}$. Then $R^{\prime }=\dfrac{%
qp^{\prime }-pq^{\prime }}{q^{2}}\Rightarrow $

$R^{\prime \prime }=\dfrac{q^{2}p^{\prime \prime }-pqq^{\prime \prime
}-2p^{\prime }q^{\prime }q+2p\left( q^{\prime }\right) ^{2}}{q^{3}}$, which
implies that 
\begin{gather*}
\left( \tsum\limits_{k=0}^{n}b_{k}x^{k}\right) ^{3}R^{\prime \prime }(x)= \\
\left( \tsum\limits_{k=0}^{n}b_{k}x^{k}\right) ^{2}\left(
\tsum\limits_{k=2}^{m}k(k-1)a_{k}x^{k-2}\right) - \\
\left( \tsum\limits_{k=0}^{m}a_{k}x^{k}\right) \left(
\tsum\limits_{k=0}^{n}b_{k}x^{k}\right) \left(
\tsum\limits_{k=2}^{n}k(k-1)b_{k}x^{k-2}\right) \\
-2\left( \tsum\limits_{k=1}^{m}ka_{k}x^{k-1}\right) \left(
\tsum\limits_{k=1}^{n}kb_{k}x^{k-1}\right) \left(
\tsum\limits_{k=0}^{n}b_{k}x^{k}\right) \\
+2\left( \tsum\limits_{k=0}^{m}a_{k}x^{k}\right) \left(
\tsum\limits_{k=1}^{n}kb_{k}x^{k-1}\right) ^{2}= \\
\left( m-n\right) \left( m-n-1\right) a_{m}b_{n}^{2}x^{2n+m-2}+\cdots .
\end{gather*}

If $2n+m-2\leq 3n$, then $m\leq n+2$, which implies that $m=n+2$ since $m>n$
and $m\neq n+1$. If $2n+m-2>3n$, then $2n+m-2-3n=\allowbreak m-n-2$ must be
even since $R^{\prime \prime }\geq 0$ as $\left\vert x\right\vert
\rightarrow \infty $. In either case, $m$ is also even. Since $m>n$ and $m$
is even, it follows that $m-n-1>0$. Since $R^{\prime \prime }\geq 0$ as $%
\left\vert x\right\vert \rightarrow \infty $, $\left( m-n\right) \left(
m-n-1\right) \dfrac{a_{m}}{b_{n}}>0$. Thus $\lim\limits_{x\rightarrow
-\infty }\left[ xR^{\prime \prime }(x)\right] =\lim\limits_{x\rightarrow
-\infty }\dfrac{\left( m-n\right) \left( m-n-1\right)
a_{m}b_{n}^{2}x^{2n+m-1}+\cdots }{b_{n}^{3}x^{3n}+\cdots }=$

$\lim\limits_{x\rightarrow -\infty }\dfrac{\left( m-n\right) \left(
m-n-1\right) a_{m}}{b_{n}}x^{\allowbreak m-n-1}=-\infty $ and

$\lim\limits_{x\rightarrow \infty }\left[ xR^{\prime \prime }(x)\right]
=\lim\limits_{x\rightarrow \infty }\dfrac{\left( m-n\right) \left(
m-n-1\right) a_{m}}{b_{n}}x^{\allowbreak m-n-1}=\infty $. By Theorem \ref{T4}%
, $I_{R}(P)=2$ for any point $P$ below the graph of $R$.
\end{proof}

\begin{proposition}
\label{P2}Suppose that $p$ and $q$ are polynomials of degree $m$ and $n$
respectively, and let $f(x)=p(x)e^{q(x)}$. Suppose that $f\,^{\prime \prime
}\geq 0$ on $\Re $. Then $I_{f}(P)=2$ for any point $P$ below the graph of $%
f $.
\end{proposition}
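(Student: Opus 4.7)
The plan is to mimic the strategy of Proposition~\ref{P1} and reduce to Theorem~\ref{T4}. First I would compute $f''$ by the product rule:
\[
f''(x) = R(x)\,e^{q(x)}, \qquad R(x) = p''(x) + 2p'(x)q'(x) + p(x)q''(x) + p(x)(q'(x))^{2},
\]
which is a polynomial in $x$. Since $e^{q(x)} > 0$, the hypothesis $f'' \geq 0$ on $\Re$ forces $R(x) \geq 0$ on $\Re$. The goal is then to show
\[
\lim_{x\to -\infty} xR(x)e^{q(x)} \in [-\infty,0) \quad \text{and} \quad \lim_{x\to +\infty} xR(x)e^{q(x)} \in (0,\infty],
\]
whereupon Theorem~\ref{T4} delivers $I_f(P)=2$ for every $P$ below the graph of $f$.

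Next I would extract the leading term of $R$. Writing $p(x)=\sum a_{k}x^{k}$ and $q(x)=\sum b_{k}x^{k}$ with $a_{m}\neq 0 \neq b_{n}$, when $n=\deg q \geq 1$ the dominant contribution to $R$ is the term $p(x)(q'(x))^{2}$, of degree $m+2n-2$ with leading coefficient $a_{m} n^{2} b_{n}^{2}$. Because $R \geq 0$ on $\Re$, that degree must be even (forcing $m$ even) and the leading coefficient positive (forcing $a_{m}>0$). Hence $xR(x)$ has odd degree $m+2n-1$ with positive leading coefficient, so $xR(x)\to +\infty$ as $x\to+\infty$ and $xR(x)\to -\infty$ as $x\to -\infty$. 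In the principal subcase, $n$ even with $b_{n}>0$, one has $q(x)\to +\infty$ on both sides; the exponential factor $e^{q(x)}\to\infty$ then magnifies $|xR(x)|$, the two limits of $xf''(x)$ have the required signs, and Theorem~\ref{T4} closes the argument.

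The main obstacle lies in the case analysis when $q$ fails to tend to $+\infty$ on both sides. If $n$ is odd, or if $n$ is even with $b_{n}<0$, then $e^{q(x)} \to 0$ on at least one side, $f$ acquires a horizontal/oblique asymptote there, and Theorem~\ref{T4} does not apply directly; in these subcases one must instead appeal to Theorem~\ref{T2} or Theorem~\ref{T3}, combined with a separate check of the sign of $xR(x)e^{q(x)}$ on the unbounded side. Organizing this split so that every combination of parities and signs of $m$, $n$, $a_m$, $b_n$ is accounted for -- and, in particular, ruling out (or handling) the possibility that $f$ acquires an asymptote that Theorem~\ref{T4}'s hypotheses cannot see -- is where the real bookkeeping of the proof lives. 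The degenerate case $n=0$ reduces to a polynomial statement for $p''$ and is handled exactly as in the proof of Proposition~\ref{P1}.
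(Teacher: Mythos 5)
Your main case is exactly the paper's argument: compute $f''=R\,e^{q}$ with $R=p(q')^{2}+2p'q'+pq''+p''$, read off the leading term $n^{2}a_{m}b_{n}^{2}x^{2n+m-2}$, deduce from $R\geq 0$ that $2n+m-2$ is even and $a_{m}>0$, and conclude via Theorem \ref{T4}. In fact you are more careful than the paper here: the paper's proof evaluates $\lim_{x\to-\infty}xf''(x)$ as $n^{2}a_{m}b_{n}^{2}\lim_{x\to-\infty}x^{2n+m-1}$, silently discarding the factor $e^{q(x)}$, which is legitimate only when $q(x)\to+\infty$ on the relevant side. You correctly flag that when $e^{q(x)}\to 0$ on one side the hypothesis of Theorem \ref{T4} fails there.

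The gap is that you leave those remaining subcases as unfinished ``bookkeeping,'' and as described they cannot be finished. If $n$ is odd (after normalizing signs, $b_{n}>0$), then $q\to-\infty$ and hence $f\to 0$ as $x\to-\infty$, so $f$ has the horizontal asymptote $L=0$ on that side and $xf''(x)\to 0$ there. Theorem \ref{T3} then gives $I_{f}(P)=2$ only for $0<t<f(s)$ and $I_{f}(P)=1$ for $t\leq 0$ --- it does not deliver $I_{f}(P)=2$ for every point below the graph. The paper's own example $f(x)=e^{x}$ (take $p\equiv 1$, $q(x)=x$, so $m=0$, $n=1$) exhibits this: $P=(0,-1)$ lies below the graph yet $I_{f}(P)=1$. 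So no appeal to Theorem \ref{T2} or \ref{T3} will close your case split with the stated conclusion; the statement requires, in effect, that $q(x)\to+\infty$ as $|x|\to\infty$ (which your sign analysis shows amounts to $n$ even), and otherwise one can only assert the trichotomy of Theorem \ref{T3}. You have, in identifying where Theorem \ref{T4} fails to apply, located a real defect in the paper's proof rather than merely a gap in your own; but the proposal as written neither completes the proof nor recognizes that the troublesome subcases are genuine counterexamples rather than harder instances.
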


\begin{proof}
Let $p(x)=\sum\limits_{k=0}^{m}a_{k}x^{k}$, $q(x)=\sum%
\limits_{k=0}^{n}b_{k}x^{k},a_{m}\neq 0\neq $ $b_{n}$. Now we must have $%
b_{n}>0$ since if $b_{n}<0$, then $\lim\limits_{\left\vert x\right\vert
\rightarrow \infty }f(x)=0$. That would imply that $f\,^{\prime \prime
}\ngeq 0$ on $\Re $. A simple computation gives 
\begin{equation*}
f\,^{\prime \prime }=(p\left( q^{\prime }\right) ^{2}+2p^{\prime }q^{\prime
}+pq^{\prime \prime }+p^{\prime \prime })e^{q}
\end{equation*}%
and 
\begin{gather*}
p\left( q^{\prime }\right) ^{2}+2p^{\prime }q^{\prime }+pq^{\prime \prime
}+p^{\prime \prime }= \\
\left( \tsum\limits_{k=0}^{m}a_{k}x^{k}\right) \left(
\tsum\limits_{k=1}^{n}kb_{k}x^{k-1}\right) ^{2}+2\left(
\tsum\limits_{k=1}^{m}ka_{k}x^{k-1}\right) \left(
\tsum\limits_{k=1}^{n}kb_{k}x^{k-1}\right) + \\
\left( \tsum\limits_{k=0}^{m}a_{k}x^{k}\right) \left(
\tsum\limits_{k=2}^{n}k(k-1)b_{k}x^{k-2}\right) +\left(
\tsum\limits_{k=2}^{m}k(k-1)a_{k}x^{k-2}\right) =
\end{gather*}%
\begin{eqnarray*}
&&n^{2}a_{m}b_{n}^{2}x^{2n+m-2}+\cdots +2mna_{m}b_{n}x^{n+m-2}+\cdots + \\
&&n(n-1)a_{m}b_{n}x^{n+m-2}+\cdots +m(m-1)a_{m}x^{m-2}+\cdots 
\end{eqnarray*}%
$f\,^{\prime \prime }\geq 0$ on $\Re $ implies that $p\left( q^{\prime
}\right) ^{2}+2p^{\prime }q^{\prime }+pq^{\prime \prime }+p^{\prime \prime
}\geq 0$ on $\Re \Rightarrow 2n+m-2$ is even and $a_{m}>0$. Thus $2n+m-1$ is
odd and $\lim\limits_{x\rightarrow \,-\infty }\left[ xf\,^{\prime \prime }(x)%
\right] =n^{2}a_{m}b_{n}^{2}\lim\limits_{x\rightarrow \,-\infty
}x^{2n+m-1}=-\infty $. Similarly, $\lim\limits_{x\rightarrow \infty }\left[
xf\,^{\prime \prime }(x)\right] =\infty $. By Theorem \ref{T4}, $I_{f}(P)=2$
for any point $P$ below the graph of $f$.
\end{proof}

\section{Polynomials}

We now prove some results about the illumination index for polynomials with
real coefficients. As earlier, for given differentiable $f$, we let $%
g_{s}(c)=f(c)+(s-c)f\,^{\prime }(c)$. We also let $\pi _{n}=$ polynomials of
degree $\leq n$.

\begin{remark}
If one or more of the tangent lines which pass thru $P$ is a multiple
tangent line, then the illumination index of $P=(s,t)$ could be strictly
smaller than the number of real roots of $g_{s}(c)-t$. This will need to be
taken into account for some of the proofs below.
\end{remark}

The following lemma holds for more than just the polynomials, but we just
consider that case in this section.

\begin{lemma}
\label{L5}Let $f$ be a polynomial and let $s\in \Re $ be given. Then the
local extrema of $g_{s}(c)$ occur at precisely the following values of $c$.

(i) $c=s$ if $\left( s,f(s)\right) $ is \textbf{not }an inflection point of $%
f$

(ii) $c=d\neq s$ if $\left( d,f(d)\right) $ is an inflection point of $f$
\end{lemma}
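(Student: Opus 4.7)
The plan is first to compute $g_s'(c)$ explicitly. By the product rule applied to $g_s(c) = f(c) + (s-c)f'(c)$, the two $f'(c)$ contributions cancel, leaving $g_s'(c) = (s-c)f''(c)$. Since $f$ is a polynomial (one may assume $\deg f \geq 2$, for otherwise $g_s$ is constant and the lemma is vacuous), $g_s'$ is itself a polynomial with only finitely many zeros, hence has constant sign on each open interval between consecutive zeros. Consequently, the local extrema of $g_s$ are precisely those critical points at which $g_s'$ changes sign, and the critical points are exactly $c = s$ together with the zeros of $f''$.

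For part (ii), suppose $d \neq s$ is a zero of $f''$. In a neighborhood of $d$ the factor $(s-c)$ has constant nonzero sign, so $g_s'$ changes sign at $d$ if and only if $f''$ does. For a polynomial, a zero of $f''$ is an inflection point of $f$ exactly when $f''$ changes sign there (equivalently, the zero has odd multiplicity). Thus $c = d$ is a local extremum of $g_s$ if and only if $(d, f(d))$ is an inflection point of $f$, which is exactly what (ii) asserts.

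For part (i), I would analyze $c = s$ directly. The factor $(s - c)$ always changes sign at $c = s$, so the sign behavior of $g_s'$ at $s$ is governed entirely by $f''$. If $(s, f(s))$ is not an inflection point, then either $f''(s) \neq 0$ or $s$ is a zero of $f''$ of even multiplicity; in either case $f''$ has constant nonzero sign in a punctured neighborhood of $s$, so $g_s'$ inherits the sign change of $(s - c)$ and $c = s$ is a local extremum. Conversely, if $s$ is an inflection point, then both factors of $g_s'(c) = (s-c)f''(c)$ change sign at $c = s$ so their product does not, and $s$ fails to be a local extremum. This delicate case at $c = s$, where both factors can vanish simultaneously and one must tease apart whether the zero of $f''$ does or does not carry a sign change, is the only real obstacle in the argument; everything else follows immediately from the factorization $g_s'(c) = (s-c)f''(c)$ and the standard sign-change characterization of local extrema for polynomials.
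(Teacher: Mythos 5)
Your proof is correct, and it takes a genuinely different route from the paper's. You work entirely with the first derivative: from the factorization $g_{s}'(c)=(s-c)f\,''(c)$ you classify each critical point by whether $g_{s}'$ changes sign there, reducing everything to the parity of the multiplicity of the relevant zero of $f\,''$ (and to the automatic sign change of the linear factor $s-c$). The paper instead derives the general formula $g_{s}^{(k)}(c)=(s-c)f^{(k+1)}(c)-(k-1)f^{(k)}(c)$ and applies the higher-order derivative test: at $c=s$ the first nonvanishing derivative of $g_{s}$ has order $m$ equal to that of $f$, which is even exactly when $(s,f(s))$ is not an inflection point, while at an inflection point $d\neq s$ the first nonvanishing derivative has order $m-1$, which is even. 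The two arguments are essentially dual ways of encoding the same parity information, but yours has two modest advantages: it avoids the inductive derivative formula, and it explicitly establishes the ``precisely'' in the statement by also showing that non-inflection zeros of $f\,''$ at $d\neq s$ and the point $c=s$ when $(s,f(s))$ \emph{is} an inflection point are not local extrema --- the converse direction that the paper's proof leaves implicit. Your observation that the lemma is vacuous for $\deg f\leq 1$ (where $g_{s}$ is constant) is a reasonable way to dispose of the degenerate case, which the paper also does not address.
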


\begin{proof}
It is easy to show that 
\begin{equation*}
g_{s}^{(k)}(c)=(s-c)f^{(k+1)}(c)-(k-1)f^{(k)}(c),k\geq 1.
\end{equation*}%
To prove (i): First, suppose that $f\,^{\prime \prime }(s)\neq 0$. Then $%
g_{s}(s)$ is a local extremum of $g_{s}(c)$ since then $g_{s}^{\prime }(s)=0$
and $g_{s}^{\prime \prime }(s)=-f\,^{\prime \prime }(s)\neq 0$. Now suppose
that $f^{(k)}(s)=0$ for $k=2,...,m-1$ and $f^{(m)}(s)\neq 0,m\geq 3$. Then $m
$ is even since $\left( s,f(s)\right) $ is not\textbf{\ }an inflection point
of $f$.  $g_{s}^{(k)}(s)=-(k-1)f^{(k)}(s)$, which implies that $%
g_{s}^{(k)}(s)=0$ for $k=2,...,m-1$ and $g_{s}^{(m)}(s)\neq 0$. Hence $%
g_{s}(s)$ is a local extremum of $g_{s}(c)$ since $m$ is even. That proves
(i).

To prove (ii): Suppose that $\left( d,f(d)\right) $ is an inflection point
of $f,d\neq s$. Suppose that $f^{(k)}(d)=0$ for $k=2,...,m-1$ and $%
f^{(m)}(d)\neq 0$, $m\geq 3$. Then $m$ is odd since $\left( d,f(d)\right) $
is an inflection point of $f$. Since $g_{s}^{(k)}(d)=0$ for $k=1,...,m-2$
and $g_{s}^{(m-1)}(d)=(s-d)f^{(m)}(d)-(k-1)f^{(m-1)}(d)=(s-d)f^{(m)}(d)\neq 0
$, $g_{s}(d)$\ is a local extremum of $g_{s}(c)$ since $m-1$ is even. That
proves (ii).
\end{proof}

Suppose that $f(x)=\tsum\limits_{k=0}^{n}a_{k}x^{k}$. Then a simple
computation yields 
\begin{equation}
g_{s}(c)-t=-(n-1)a_{n}c^{n}+\tsum\limits_{k=1}^{n-1}\left[
s(k+1)a_{k+1}-(k-1)a_{k}\right] c^{k}+a_{0}+sa_{1}-t,n\geq 2.  \label{6}
\end{equation}

\begin{remark}
By (\ref{6}) it follows immediately that if $n\geq 3$ is \textit{odd}, then $%
I_{f}(P)\geq 1$ for any $P\in G_{f}^{C}$.
\end{remark}

\begin{lemma}
\label{L6}Let $f(x)=\sum\limits_{k=0}^{n}a_{k}x^{k},a_{n}\neq 0$ and let $%
s\in \Re $.

(i) If\ $n\geq 3$ and odd, then

$\lim\limits_{c\rightarrow -\infty }g_{s}(c)=\left[ sgn(a_{n})\right] \infty 
$ and $\lim\limits_{c\rightarrow \infty }g_{s}(c)=\left[ -sgn(a_{n})\right]
\infty $

(ii) If\ $n~$is even, then

$\lim\limits_{c\rightarrow -\infty }g_{s}(c)=\left[ -sgn(a_{n})\right]
\infty $ and $\lim\limits_{c\rightarrow \infty }g_{s}(c)=\left[ -sgn(a_{n})%
\right] \infty $
\end{lemma}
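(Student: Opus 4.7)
The plan is to read off both limits directly from the explicit polynomial formula for $g_s(c)-t$ already established in equation (\ref{6}), since everything reduces to tracking the sign of the leading term.

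First, I observe that (\ref{6}) expresses $g_s(c)-t$ as a polynomial in $c$ of degree exactly $n$, with leading coefficient $-(n-1)a_n$. Since $n\geq 2$ (in both parts $n\geq 3$ for (i), and $n$ even with $a_n\neq 0$ forces $n\geq 2$ for (ii)) and $a_n\neq 0$, this leading coefficient is nonzero, and hence $g_s(c)-t$ (equivalently $g_s(c)$) behaves asymptotically like $-(n-1)a_n c^n$ as $c\to\pm\infty$.

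Next, I split into the two parity cases:

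\textbf{Case (i), $n\geq 3$ odd.} Here $n-1>0$ is even, so the sign of $-(n-1)a_n c^n$ as $c\to\infty$ is $-\mathrm{sgn}(a_n)$, giving $\lim_{c\to\infty}g_s(c)=[-\mathrm{sgn}(a_n)]\infty$. As $c\to-\infty$, $c^n\to-\infty$ (since $n$ is odd), so $-(n-1)a_n c^n$ has sign $\mathrm{sgn}(a_n)$, giving $\lim_{c\to-\infty}g_s(c)=[\mathrm{sgn}(a_n)]\infty$.

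\textbf{Case (ii), $n$ even.} Here $c^n\to+\infty$ as $c\to\pm\infty$, so $-(n-1)a_nc^n$ has sign $-\mathrm{sgn}(a_n)$ in both directions, giving $\lim_{c\to\pm\infty}g_s(c)=[-\mathrm{sgn}(a_n)]\infty$.

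There is no real obstacle: the formula (\ref{6}) already does all the work, and the proof is a one-line application of the dominance of the leading term plus careful bookkeeping of the parity of $n$ and the sign of $a_n$. The only thing to be slightly careful about is noting that $s$ appears only in coefficients of $c^k$ for $k\leq n-1$, so it never contributes to the leading asymptotics — which is precisely why the limits depend only on $a_n$ and the parity of $n$, not on $s$.
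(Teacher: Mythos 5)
Your proposal is correct and is essentially the paper's own proof: the paper simply states that the lemma ``follows immediately from (\ref{6})'', and your argument is exactly the intended reading-off of the sign of the leading term $-(n-1)a_nc^n$ according to the parity of $n$ and the sign of $a_n$. (The only quibble is your remark that ``$n$ even with $a_n\neq 0$ forces $n\geq 2$'' --- it does not exclude $n=0$, for which the statement is vacuously problematic since $g_s$ is constant; but the paper implicitly assumes $n\geq 2$ there as well, since (\ref{6}) is only stated for $n\geq 2$.)
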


\begin{proof}
The proof follows immediately from (\ref{6}).
\end{proof}

Our first theorem in this section is about cubic polynomials. We shall prove
some more results below for the case when $n$ is odd.

\begin{theorem}
\label{T5}Let $f$ be a cubic polynomial. Then for any $k=1,2,3$ there exists 
$P\in G_{f}^{C}$ such that $I_{f}(P)=k$.
\end{theorem}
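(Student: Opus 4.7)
The plan is to reduce the problem to counting distinct real roots of the cubic $g_s(c)-t$. Since $f-T$ has degree $3$ but would have to be divisible by $(x-a)^2(x-b)^2$ (degree $4$) if a line $T$ were tangent to $f$ at two distinct points $a\ne b$, a cubic admits no multiple tangent lines; hence by (\ref{4}), $I_f(P)$ equals the number of distinct real solutions of $g_s(c)=t$. Writing $f(x)=a_3x^3+a_2x^2+a_1x+a_0$, the derivative $g_s'(c)=(s-c)f''(c)$ vanishes precisely at $c=s$ and at the abscissa $d=-a_2/(3a_3)$ of the unique inflection point of $f$.

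For $k=1$, I would take $s=d$: then $g_s'(c)=-6a_3(c-d)^2$ has constant sign, so $g_s$ is strictly monotone on $\Re$ and $g_s-t$ has exactly one real root for every $t\ne f(d)$. Any such $P=(d,t)$ gives $I_f(P)=1$. For $k=3$, I would take $s\ne d$: the cubic $g_s$ then has a local maximum and a local minimum at the two distinct critical points, with critical values $f(s)=g_s(s)$ and $T_d(s)=g_s(d)$. These values are unequal, since the Taylor expansion of $f$ at its inflection point (where $f''(d)=0$) yields the identity $f(x)-T_d(x)=a_3(x-d)^3$, so $f(s)-T_d(s)=a_3(s-d)^3\ne 0$. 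Choosing $t$ strictly between these two critical values makes $g_s-t$ change sign three times, hence have three distinct real roots; and $t\ne f(s)$ ensures $P\in G_f^C$, so $I_f(P)=3$.

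For $k=2$, I would take $s\ne d$ and $t=T_d(s)=g_s(d)$. Then $c=d$ is a root of $g_s-t$ and a critical point of $g_s$, so the multiplicity is at least $2$; the computation $g_s''(d)=-f''(d)+(s-d)f'''(d)=(s-d)\cdot 6a_3\ne 0$ shows the multiplicity is exactly $2$. The remaining (simple) root $c_1\ne d$ of the cubic $g_s-t$ then supplies a second distinct tangent, and the identity $f(s)-T_d(s)=a_3(s-d)^3\ne 0$ again gives $t\ne f(s)$, so $P\in G_f^C$ and $I_f(P)=2$.

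The only delicate step is the $k=2$ case, where one must genuinely produce a double root of $g_s-t$. The natural candidate is the inflectional tangent $T_d$ evaluated at $s$, and the computation of $g_s''(d)$, together with the $(x-d)^3$ factorization at the inflection point, simultaneously confirms that the root at $d$ has multiplicity exactly $2$ (not $3$) and that the chosen point lies off the graph of $f$.
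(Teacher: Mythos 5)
Your proposal is correct and follows the same core strategy as the paper's proof: reduce $I_{f}(P)$ to counting the distinct real roots of the cubic $g_{s}(c)-t$ and analyze its two critical points at $c=s$ and $c=d$. Two details differ, both in your favor. For $k=1$ the paper keeps $s\neq d$ and takes $t$ outside the interval between the two critical values of $g_{s}$, whereas you take $s=d$ so that $g_{d}^{\prime }(c)=-6a_{3}(c-d)^{2}$ makes $g_{d}$ strictly monotone; both work. For $k=2$ the paper simply takes $t$ equal to either critical value of $g_{s}$, but one of those critical values is $g_{s}(s)=f(s)$, which would place $P$ on the graph of $f$ and outside $G_{f}^{C}$; your version singles out the other critical value $t=g_{s}(d)=T_{d}(s)$, checks via $g_{s}^{\prime \prime }(d)=6a_{3}(s-d)\neq 0$ that the root at $d$ is exactly double (so a third, simple root exists), and uses $f(s)-T_{d}(s)=a_{3}(s-d)^{3}\neq 0$ to confirm $P\in G_{f}^{C}$, which closes that small gap. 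Finally, your degree-count argument that a cubic admits no multiple tangent line (since $(x-a)^{2}(x-b)^{2}$ would have to divide the degree-three polynomial $f-T$) supplies a proof of a fact the paper merely asserts.
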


\begin{proof}
Suppose, without loss of generality, that $f(x)=\tsum%
\limits_{k=0}^{3}a_{k}x^{k}$ with $a_{3}>0$. Then $f$ has exactly one
inflection point, $\left( d,f(d)\right) $. Choose any $s\neq d$. Then $%
g_{s}(c)$ has two exactly local extrema, $g_{s}(c_{1})$ and $g_{s}(c_{2})$,
by Lemma \ref{L5}, part (ii). Since $\lim\limits_{c\rightarrow -\infty
}g_{s}(c)=\infty $ and $\lim\limits_{c\rightarrow \infty }g_{s}(c)=-\infty $
by Lemma \ref{L6}, we may assume that $g_{s}(c_{1})$ equals the local
maximum and $g_{s}(c_{2})$ equals the local minimum, with $c_{1}<c_{2}$. If $%
t<g(c_{2})$ or $t>g(c_{1})$, then the horizontal line $y=t$ intersects the
graph of $g_{s}$ in one point, which implies that $I_{f}(P)=1$. If $%
g(c_{2})<t<g(c_{1})$, then the horizontal line $y=t$ intersects the graph of 
$g_{s}$ in three points, $\left( d_{i},g_{s}\left( d_{i}\right) \right)
,i=1,2,$ or $3$. Then $I_{f}(P)=3$ since a cubic polynomial cannot have any
multiple tangent lines. Finally, the horizontal lines $y=g(c_{1})$ and $%
y=g(c_{2})$ intersect the graph of $g_{s}$ in two points, which yields a
point, $P=(s,t)$, such that $I_{f}(P)=2$. One could also use the fact that $%
g_{d}(c)$ has one local extremum(by Lemma \ref{L5}, part (i)) to obtain a
point, $P=(s,t)$, such that $I_{f}(P)=2$.
\end{proof}

The following example shows that Theorem \ref{T5} does not hold in general
for $n$ odd, $n\geq 5$.

\begin{example}
Let $f(x)=x^{n},n\geq 5$ and odd. Then for any $s\in \Re $, $%
g_{s}(c)-t=c^{n}+(s-c)nc^{n-1}-t=-(n-1)c^{n}+nsc^{n-1}-t$ and $%
g_{s}(-c)-t=(n-1)c^{n}+nsc^{n-1}-t$. We consider the following six cases.

Case 1: $s,t>0$. Then $g_{s}(c)-t$ has $2$ sign changes and $g_{s}(-c)-t$
has $1$ sign change, which implies that $g_{s}(c)-t$\ has at most $3$\
distinct real roots.

Case 2: $s>0,t<0$. Then $g_{s}(c)-t$ has $1$ sign change and $g_{s}(-c)-t$
has $0$ sign changes, which implies that $g_{s}(c)-t$\ has at most $1$\ real
root.

Case 3: $s<0,t>0$. Then $g_{s}(c)-t$ has $0$ sign changes and $g_{s}(-c)-t$
has $1$ sign change, which implies that $g_{s}(c)-t$\ has at most $1$\ real
root.

Case 4: $s,t<0$. Then $g_{s}(c)-t$ has $1$ sign change and $g_{s}(-c)-t$ has 
$2$ sign changes, which implies that $g_{s}(c)-t$\ has at most $3$\ distinct
real roots.

Case 5: $s=0$. Then $g_{s}(c)-t=-(n-1)c^{n}-t$, which has $1$\ real root.

Case 6: $t=0$. Then $g_{s}(c)-t=-(n-1)c^{n}+nsc^{n-1}=c^{n-1}\left[
-(n-1)c+ns\right] $ has $2$\ distinct real roots.

Hence for any point $P\in G_{f}^{C}$, $I_{f}(P)\leq 3$
\end{example}

The example above shows that there are odd polynomials of any degree such
that $I_{f}(P)\leq 3$ for all $P\in G_{f}^{C}$. Our next result is a
positive result about the illumination index of all odd polynomials.

\begin{theorem}
Suppose that $f$ is a polynomial of degree\ $n\geq 5,n$ odd. Then there
exists $P\in G_{f}^{C}$ such that $I_{f}(P)=3$.
\end{theorem}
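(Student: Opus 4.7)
The strategy is to use Lemmas~\ref{L5} and~\ref{L6} to understand the graph of $g_{s}$, then choose $(s,t)$ so that $g_{s}(c)=t$ has exactly three real roots, each producing a distinct tangent line to $f$ through $P=(s,t)$. Without loss of generality assume the leading coefficient $a_{n}$ of $f$ is positive. First I would verify that $f$ has an odd number $k\geq 1$ of inflection points: since $n$ is odd, $f''$ has odd degree $n-2\geq 3$, and factoring $f''$ over $\Re$ shows that the sum of the multiplicities of its real roots has the same parity as $n-2$, namely odd. Since an inflection point of $f$ is exactly a real root of $f''$ of odd multiplicity, the number $k$ of inflection points is itself odd.

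Next I would pick $s\in\Re$ that is not an inflection point of $f$ and for which the $k+1$ critical values of $g_{s}$ are pairwise distinct. The first requirement excludes the finite set of inflection points, and each coincidence between critical values---namely $f(s)=g_{s}(d_{i})$ or $g_{s}(d_{i})=g_{s}(d_{j})$ for inflection points $d_{i}\neq d_{j}$---reduces to a polynomial condition on $s$, so the admissible set of $s$ is the complement of a finite set in $\Re$. For such $s$, Lemma~\ref{L5} says that $g_{s}$ has exactly $k+1$ critical points (namely $s$ and the $k$ inflection points), while Lemma~\ref{L6} gives $g_{s}(c)\to +\infty$ as $c\to -\infty$ and $g_{s}(c)\to -\infty$ as $c\to \infty$. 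Consequently the critical points of $g_{s}$ alternate between local minima and local maxima in $c$-order, beginning with a minimum and ending with a maximum. Because every local minimum value of $g_{s}$ is strictly smaller than the two adjacent local maximum values, the largest critical value $M^{*}$ of $g_{s}$ must be attained at a local maximum, and uniquely so by our genericity assumption.

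Let $M^{**}$ denote the second-largest critical value of $g_{s}$. For any $t\in(M^{**},M^{*})$, a direct sign-change analysis on each interval between consecutive critical points of $g_{s}$ shows that $g_{s}(c)=t$ has exactly three real solutions $c_{1}<c_{2}<c_{3}$, with two of them bracketing the critical point at which $M^{*}$ is attained and the third lying on the initial descent of $g_{s}$ from $+\infty$. The main obstacle is then to verify that $T_{c_{1}},T_{c_{2}},T_{c_{3}}$ are three \emph{distinct} tangent lines to $f$ through $P=(s,t)$. A coincidence $T_{c_{i}}=T_{c_{j}}$ with $i\neq j$ would force $P$ to lie on a multiple tangent line of $f$, but a polynomial of degree $n$ admits only finitely many multiple tangent lines, since a line $y=mx+b$ tangent to $f$ at two distinct points forces $f(x)-(mx+b)$ to have two distinct double roots---a codimension-$2$ algebraic condition on $(m,b)$. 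Each such line meets the vertical line $x=s$ in a single point, so only finitely many $t\in(M^{**},M^{*})$ are ruled out; excluding also $t=f(s)$ ensures $P\in G_{f}^{C}$. Any remaining $t$ yields $I_{f}(P)=3$.
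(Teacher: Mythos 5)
Your proposal follows the same route as the paper: analyze the auxiliary function $g_{s}$ via Lemmas \ref{L5} and \ref{L6}, choose $t$ so that $g_{s}(c)=t$ has exactly three roots, and dispose of possible coincidences among the resulting tangent lines by the finiteness of multiple tangent lines of a polynomial. The difference is one of care rather than of strategy, and it is in your favor: the paper's proof asserts (following Theorem \ref{T5}) that $g_{s}$ has \emph{exactly two} local extrema, which is only correct when $f$ has exactly one inflection point; for $n\geq 7$ (and even for some quintics) there may be several inflection points, hence several local maxima of $g_{s}$, and a $t$ chosen merely ``between a local min and a local max'' can produce more than three roots. Your device of taking $t$ strictly between the largest and second-largest critical values of $g_{s}$ fixes exactly this, and your parity count of inflection points correctly matches the min/max alternation forced by Lemma \ref{L6}.

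One step of yours does need repair. You claim that each coincidence $g_{s}(d_{i})=g_{s}(d_{j})$ is a nontrivial polynomial condition on $s$, so that a generic $s$ makes all critical values distinct. But $g_{s}(d_{i})=T_{d_{i}}(s)$, so the condition $g_{s}(d_{i})=g_{s}(d_{j})$ holds \emph{identically in} $s$ precisely when $T_{d_{i}}=T_{d_{j}}$, i.e.\ when two inflection points share a tangent line; this genuinely occurs, e.g.\ for $f(x)=(x-1)^{3}(x+1)^{3}(x-a)$ of degree $7$, whose inflection points at $\pm 1$ both have tangent line $y=0$. In that situation your largest critical value could be attained at two critical points for every $s$, and the interval $(M^{**},M^{*})$ would be empty (or the root count would jump to five). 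The fix is cheap, since you only need the \emph{top} critical value to be uniquely attained: with $a_{n}>0$, take $s$ large enough that $f''(s)>0$ (so $c=s$ is a local maximum of $g_{s}$ by Lemma \ref{L5}(i)) and that $f(s)>T_{d_{i}}(s)$ for every inflection point $d_{i}$, which holds for all large $s$ because $f$ minus any linear function tends to $+\infty$. Then $M^{*}=f(s)$ is attained only at $c=s$, your interval $(M^{**},M^{*})$ is nonempty, and every $t$ in it automatically satisfies $t<f(s)$, so $P\in G_{f}^{C}$. With that adjustment your argument is complete, and your handling of the multiple-tangent issue (finitely many bad $t$ on the line $x=s$) is a slightly cleaner packaging of the paper's ``otherwise there would be infinitely many multiple tangents'' argument.
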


\begin{proof}
Since $n$ is odd, $f\,^{\prime \prime }$ must have at least one real root
where it changes sign. Hence $f$ has at least one inflection point, $\left(
d,f(d)\right) $. Choose any $s\neq d$. Arguing as in the proof of Theorem %
\ref{T5} above,$g_{s}(c)$ has two exactly local extrema, $g_{s}(c_{1})$ and $%
g_{s}(c_{2})$, by Lemma \ref{L5}, part (ii), and we may assume that $%
g_{s}(c_{1})$ equals the local maximum and $g_{s}(c_{2})$ equals the local
minimum, with $c_{1}<c_{2}$. However, it is possible that $f$ has multiple
tangent lines. Suppose that for each $t,g(c_{2})<t<g(c_{1})$, $y=t$
intersects the graph of $g_{s}$ in the three distinct points $\left(
d_{i},g_{s}\left( d_{i}\right) \right) $, $i=1,2,3$, and the tangent lines
at $\left( d_{i},g_{s}\left( d_{i}\right) \right) $ and at $\left(
d_{j},g_{s}\left( d_{j}\right) \right) $ are identical for some $i\neq j$.
Then $f$ would have infinitely many multiple tangent lines since the $\left(
d_{i},g_{s}\left( d_{i}\right) \right) $ change with $t$. But a polynomial
can only have finitely many multiple tangent lines(that is not difficult to
prove), and thus we can choose $t,g(c_{2})<t<g(c_{1})$, such that $\left(
d_{i},g_{s}\left( d_{i}\right) \right) $ yield three distinct tangent lines
to $f$, which yields $I_{f}(P)=3$.
\end{proof}

\begin{theorem}
Suppose that $f$ is a polynomial of degree\ $n\geq 2,n$ even. Then there
exist points $P_{1},P_{2}\in G_{f}^{C}$ such that $I_{f}(P_{1})=0$ and $%
I_{f}(P_{1})=2$.
\end{theorem}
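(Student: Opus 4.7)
Assume without loss of generality that the leading coefficient $a_n$ of $f$ is positive; the case $a_n<0$ is handled symmetrically (the roles of max and min reverse). By equation (\ref{6}), $g_s(c)$ is a polynomial in $c$ of degree $n$ with leading coefficient $-(n-1)a_n<0$, so since $n$ is even, $g_s(c)\to -\infty$ as $c\to \pm\infty$ for every fixed $s$. In particular $g_s$ is bounded above and attains a finite global maximum $M_s=\max_{c\in\Re} g_s(c)$.

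For $P_1$ the argument is immediate: pick any $s\in\Re$ and any $t>M_s$. Then $g_s(c)<t$ for all $c$, so by (\ref{4}) no tangent line to $f$ passes through $(s,t)$, giving $I_f(P_1)=0$. Since $t>M_s\geq g_s(s)=f(s)$, the point $P_1=(s,t)$ lies in $G_f^C$.

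For $P_2$ (the statement evidently intends $I_f(P_2)=2$), the plan is to take $s$ so large that $g_s$ has $c=s$ as its \emph{unique, non-degenerate} global maximum. The critical points of $g_s$ are $c=s$ together with the real roots $d_1,\ldots,d_k$ of $f''$, a polynomial of degree $n-2$, hence with finitely many real zeros. The key comparison is that $g_s(s)=f(s)\sim a_n s^n$ grows like $s^n$, while each $g_s(d_i)=f(d_i)+(s-d_i)f'(d_i)$ is \emph{linear} in $s$; so for all sufficiently large $s$ we have $g_s(s)>g_s(d_i)$ for every $i$. Moreover $f''$ has even degree $n-2$ with positive leading coefficient, so $f''(s)\to\infty$, which makes $g_s''(s)=-f''(s)<0$ for large $s$. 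Hence $c=s$ is a non-degenerate local maximum whose value strictly exceeds every other critical value of $g_s$, and since $g_s\to-\infty$ at $\pm\infty$ it is the unique strict global maximum.

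With such an $s$ fixed, set $t=f(s)-\varepsilon$ for $\varepsilon>0$ so small that $t$ still strictly exceeds every $g_s(d_i)$. Because $t$ lies strictly between two consecutive critical values of the polynomial $g_s$, the number of real solutions of $g_s(c)=t$ is constant on this $\varepsilon$-interval; just below the non-degenerate maximum at $c=s$ this count is exactly two, arising from two nearby roots $c_1<s<c_2$, with no further real roots anywhere (an additional root would force $g_s$ to exceed $t$ at some other critical point, contradicting our choice of $\varepsilon$). Finally, $c_1$ and $c_2$ lie in a neighborhood of $s$ where $f''(s)>0$, so $f'$ is strictly monotonic there, giving $f'(c_1)\neq f'(c_2)$; the two tangent lines therefore have distinct slopes and are genuinely distinct. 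Hence $I_f(P_2)=2$ with $P_2=(s,t)\in G_f^C$. The main obstacle I foresee is making rigorous the root-counting step: the cleanest justification is that between consecutive critical values of a polynomial the number of real preimages is locally constant, so the real work is in showing that $c=s$ strictly dominates every other critical value of $g_s$, which is delivered by the $s^n$-versus-linear-in-$s$ comparison above.
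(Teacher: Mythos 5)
Your proposal is correct, but for the point $P_2$ it takes a genuinely different route from the paper's. For $P_1$ the two arguments coincide: since $n$ is even, $g_s(c)$ tends to the same signed infinity at both ends (Lemma \ref{L6}, or the leading term of (\ref{6})), so some horizontal line misses the graph of $g_s$ entirely. For $P_2$, the paper splits into cases: when $f''\geq 0$ on $\Re$ it invokes Proposition \ref{P1}, and otherwise it uses an inflection point of $f$ to produce an extra local extremum of $g_s$ and asserts, somewhat informally, that some level $t$ meets the graph of $g_s$ in precisely two points whose tangent lines can be taken distinct. You instead give a single unified construction: push $s$ far to the right so that the critical value $g_s(s)=f(s)\sim a_n s^n$ dominates the finitely many other critical values $g_s(d_i)=f(d_i)+(s-d_i)f'(d_i)$, which grow only linearly in $s$; this makes $c=s$ the strict, non-degenerate global maximizer of $g_s$, and a level $t=f(s)-\varepsilon$ just below it meets the graph in exactly two points $c_1<s<c_2$, both lying in a neighborhood of $s$ where $f''>0$, whence $f'(c_1)\neq f'(c_2)$ and the two tangents are genuinely distinct. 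This buys independence from Proposition \ref{P1} and from any appeal to the finiteness of multiple tangent lines, and it makes explicit the ``exactly two'' and ``distinct'' steps that the paper leaves informal. The one spot to tighten is your parenthetical root count: an extra root of $g_s-t$ forces a critical point at which $g_s$ either \emph{equals} $t$ (a tangential root) or exceeds $t$; both are ruled out because every critical value other than $f(s)$ is strictly below $t$, so the conclusion stands.
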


\begin{proof}
Since $n$ is even, for any $s\in \Re $, $\lim\limits_{c\rightarrow -\infty
}g_{s}(c)=-\infty $ and $\lim\limits_{c\rightarrow \infty }g_{s}(c)=-\infty $%
, or $\lim\limits_{c\rightarrow -\infty }g_{s}(c)=\infty $ and $%
\lim\limits_{c\rightarrow \infty }g_{s}(c)=\infty $ by Lemma \ref{L6}. Thus
there must be values of $t$ such that the horizontal line $y=t$ does not
intersect the graph of $g_{s}$, which implies that $I_{f}(P)=0$ for $P=(s,t)$%
. Now suppose that $f\,^{\prime \prime }\geq 0$ on $\Re $. Then $I_{f}(P)=2$
for any point $P$ below the graph of $f$ by Proposition \ref{P1}. If $%
f\,^{\prime \prime }\ngeq 0$ on $\Re $, then $f$ has at least one inflection
point, $\left( d,f(d)\right) $, which implies that $g_{s}$ has at least one
local extremum. Again, since $\lim\limits_{c\rightarrow -\infty
}g_{s}(c)=-\infty $ and $\lim\limits_{c\rightarrow \infty }g_{s}(c)=-\infty $%
, or $\lim\limits_{c\rightarrow -\infty }g_{s}(c)=\infty $ and $%
\lim\limits_{c\rightarrow \infty }g_{s}(c)=\infty $, there must be values of 
$t$ such that the horizontal line $y=t$ intersects the graph of $g_{s}$ in
precisely two points,$\left( d_{i},g_{s}\left( d_{i}\right) \right) $, $i=1,2
$. In addition, one can choose $t$ so that the tangent lines to the graph of 
$f$ at $\left( d_{i},f\left( d_{i}\right) \right) $, $i=1,2$ are distinct,
which yields $I_{f}(P)=2$.
\end{proof}

\section{Theta Illumination Index \label{S3}}

Let $\theta $ be a given angle with $0\leq \theta \leq \dfrac{\pi }{2}$. We
call $L_{\theta }$ a \textbf{theta line }at\textbf{\ }$P$ if $L_{\theta }$
makes an angle, $\theta $, with the graph of $f$ at $P$. Here the angle
between two lines in a plane is defined to be $0$, if the lines are
parallel, or the smaller angle having as sides the half-lines starting from
the intersection point of the lines and lying on those two lines, if the
lines are not parallel.

\begin{definition}
Let $f(x)$ be a differentiable function on the real line and let $P\in
G_{f}^{C}$. Let $\theta $ be a given angle with $0\leq \theta \leq \dfrac{%
\pi }{2}$. We say that the $\theta $ \textit{illumination index} of $P$,
denoted by $I_{f,\theta }(P)$, is $k$ if there are $k$ distinct theta lines
to the graph of $f$ which pass through $P$. In particular, we use $%
I_{f,N}(P) $ to denote the $\dfrac{\pi }{2}$ \textit{illumination index} of $%
P$. In that case, of course, $L_{\theta }$ is a normal line to the graph of $%
f$.
\end{definition}

Unlike the case with illumination by tangent lines, where it is clearly
possible that \textbf{no} tangent line passes thru a given point $P\in
G_{f}^{C}$, this cannot happen with normal lines.

\begin{theorem}
\label{normal}For any differentiable $f$ defined on $\Re $ and any point $%
P\in G_{f}^{C}$, $I_{f,N}(P)\geq 1$
\end{theorem}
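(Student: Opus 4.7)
The plan is to realize a normal line through $P$ as a critical point of the squared distance function from $P$ to the graph. Let $P=(s,t)\in G_f^C$ and define
\[
D(c) = (c-s)^{2} + (f(c)-t)^{2},
\]
the square of the distance from $P$ to the graph point $(c,f(c))$. Since $f$ is differentiable (hence continuous) on $\Re$, $D$ is continuous on $\Re$. Moreover, $(c-s)^{2}\to\infty$ as $|c|\to\infty$, so $D(c)\to\infty$ as $|c|\to\infty$. Therefore $D$ attains a global minimum at some $c_{0}\in\Re$.

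Since $f$ is differentiable, so is $D$, with
\[
D'(c) = 2(c-s) + 2f'(c)\bigl(f(c)-t\bigr).
\]
At the minimizer $c_{0}$ we have $D'(c_{0})=0$, i.e.,
\[
(c_{0}-s) + f'(c_{0})\bigl(f(c_{0})-t\bigr) = 0.
\]
This says precisely that the vector $\bigl(s-c_{0},\,t-f(c_{0})\bigr)$ is orthogonal to the tangent direction $\bigl(1,\,f'(c_{0})\bigr)$ at $(c_{0},f(c_{0}))$.

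Next I would argue that this vector is nonzero and hence defines a genuine normal line through $P$. Because $P\in G_{f}^{C}$, we have $P\neq(c_{0},f(c_{0}))$, so $\bigl(s-c_{0},\,t-f(c_{0})\bigr)\neq(0,0)$. Therefore the line through $(c_{0},f(c_{0}))$ and $P$ is well defined, is perpendicular to the tangent line at $(c_{0},f(c_{0}))$, and hence is the normal line to the graph of $f$ at $c_{0}$. This single normal line passes through $P$, giving $I_{f,N}(P)\geq 1$.

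The argument has essentially no obstacle; the only mild subtlety is ensuring that the optimality condition $D'(c_{0})=0$ really encodes perpendicularity, which is immediate from the dot-product form above, and that $P$ does not coincide with the nearest graph point, which is guaranteed by the hypothesis $P\in G_{f}^{C}$. Note that no assumption on $f''$ is needed, and vertical tangents never arise because $f$ is differentiable everywhere on $\Re$.
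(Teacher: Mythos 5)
Your proof is correct and is essentially the same argument as the paper's (the paper phrases it via the smallest circle centered at $P$ meeting $G_f$, and its follow-up remark describes exactly your distance-minimization version). Your write-up is actually more complete, since you justify the existence of the minimizer via the coercivity of $D$ and derive perpendicularity from the first-order condition $D'(c_0)=0$, both of which the paper leaves implicit.
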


\begin{proof}
Given $P=(s,t)$, let $S=$ set of circles centered at $P$ which also
intersect $G_{f}$, and let $C_{0}$ be the circle in $S$ with the smallest
radius. Then $C_{0}$ is tangent to $G_{f}$ at some point $(c_{0},f(c_{0}))$.
The line $\overleftrightarrow{(s,t)\,(c_{0},f(c_{0}))}$ is then a normal
line passing thru $P$.
\end{proof}

\begin{remark}
Alternatively, one could also look at the distance from $P$ to $G_{f}$--the
minimal distance is obtained at $(c_{0},f(c_{0}))$ with $\overleftrightarrow{%
P\,(c_{0},f(c_{0}))}$ $\ $perpendicular to the tangent at $(c_{0},f(c_{0}))$.
\end{remark}

We believe that Theorem \ref{normal} holds for theta lines in general, but
have not been able to prove it.

\begin{conjecture}
For any differentiable $f$ defined on $\Re $, any given $0\leq \theta \leq 
\dfrac{\pi }{2}$, and any point $P\in G_{f}^{C}$, $I_{f,\theta }(P)\geq 1$.
\end{conjecture}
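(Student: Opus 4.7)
The natural plan is to reformulate the existence of a theta line through $P$ as a root-finding problem and combine an intermediate value argument with the already-established $\theta =\pi /2$ case. For each $c\in \Re $, let $\alpha (c)\in [0,\pi /2]$ be the (unsigned) angle between the tangent to $f$ at $(c,f(c))$ and the segment from $(c,f(c))$ to $P$. Since $P\notin G_{f}$ this segment is never degenerate, and, granting $f\in C^{1}$, $\alpha $ is continuous on $\Re $ (the merely differentiable case needs additional care, likely via the Darboux property of $f\,^{\prime }$). By construction the line $\overleftrightarrow{P\,(c,f(c))}$ is a theta line at angle $\theta $ iff $\alpha (c)=\theta $, so the conjecture reduces to showing $\theta \in \alpha (\Re )$.

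The plan has three steps. First, Theorem \ref{normal} hands us a $c_{0}$ with $\alpha (c_{0})=\pi /2$. Second, one shows $\inf_{c\in \Re }\alpha (c)=0$; this is the main obstacle. Third, since the continuous image of the connected set $\Re $ is an interval, $\alpha (\Re )$ is an interval containing $\pi /2$ and accumulating at $0$, hence it contains every $\theta \in (0,\pi /2]$, producing the desired $c$ by the intermediate value theorem. I note in passing that the conjecture as written is false at $\theta =0$: Theorem \ref{T2}(iii) and Theorem \ref{T3} exhibit $f$ and $P$ with $I_{f}(P)=I_{f,0}(P)=0$, so the intended statement is presumably $0<\theta \leq \pi /2$.

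The hard part is showing $\inf \alpha =0$. The natural attack is through the behaviour at infinity: if $f\,^{\prime }(c)\to L\in [-\infty ,+\infty ]$ as $c\to \infty $, then a Cesaro/L'Hopital argument forces the segment slope $(f(c)-t)/(c-s)$ to the same limit, whence $\alpha (c)\to 0$. The real difficulty arises when $f\,^{\prime }$ oscillates without a limit. One hopes to extract, by a compactness argument on the circle of directions, a sequence $c_{k}\to \infty $ along which both the tangent inclination and the segment inclination tend to a common value; a mean value theorem observation, namely that $(f(c)-f(s))/(c-s)=f\,^{\prime }(\xi )$ for some $\xi $ between $s$ and $c$, is likely the key tool. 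Fully controlling every pathological oscillatory behaviour of a merely differentiable $f$ is delicate, which is presumably why this remains stated as a conjecture.

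An alternative formulation, paralleling Section 2, replaces $\alpha $ by the two functions
\[
G_{\theta }^{\pm }(c)\;=\;f(c)+(s-c)\,m_{\theta }^{\pm }(c),\qquad m_{\theta }^{\pm }(c)\;=\;\frac{f\,^{\prime }(c)\pm \tan \theta }{1\mp f\,^{\prime }(c)\tan \theta },
\]
where $m_{\theta }^{\pm }(c)$ is the slope of the line through $(c,f(c))$ making signed angle $\pm \theta $ with the tangent. The condition that $P=(s,t)$ lie on the corresponding theta line is $G_{\theta }^{\pm }(c)=t$, exactly mirroring the role of $g_{s}$ in Section 2. The singularities at $f\,^{\prime }(c)=\pm \cot \theta $ (vertical theta lines) are in fact helpful: they drive $G_{\theta }^{\pm }$ to $\pm \infty $ and can produce additional sign changes which, combined with the limits at $c\to \pm \infty $, may force a zero of $G_{\theta }^{+}-t$ or $G_{\theta }^{-}-t$ via IVT.
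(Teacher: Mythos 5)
There is nothing in the paper to compare your attempt against: the statement you were given is explicitly a \emph{conjecture}, and the author states immediately before it that he ``believe[s] that Theorem \ref{normal} holds for theta lines in general, but ha[s] not been able to prove it.'' Your submission, by its own admission, is also not a proof --- it is a plan with the decisive step left open. The reduction to the angle function $\alpha(c)$ and the intermediate value theorem is a sensible skeleton, and step one ($\alpha(c_{0})=\pi/2$ via Theorem \ref{normal}) is fine, but the entire content of the conjecture is concentrated in your step two, the claim that $\inf_{c}\alpha(c)=0$, for which you offer only a heuristic (Cesaro/L'H\^{o}pital when $f\,^{\prime}$ has a limit, a hoped-for compactness extraction otherwise). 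Note that $\alpha(c)=0$ means precisely that the tangent at $(c,f(c))$ passes through $P$, i.e.\ $g_{s}(c)=t$; so asserting $\inf\alpha=0$ is asserting that $P$ is ``almost illuminated'' by tangent lines, and Section 2 of the paper shows this already fails outright ($I_{f}(P)=0$) for points below both oblique asymptotes --- in that regime you would need $\alpha$ to approach $0$ without attaining it, which is exactly the delicate asymptotic analysis you have not supplied. A second unresolved gap, which you flag but do not close, is that $\alpha$ need not be continuous when $f$ is merely differentiable ($f\,^{\prime}$ can be discontinuous), so even the IVT step is not secured under the stated hypotheses.

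Two genuinely useful contributions of your write-up are worth keeping. First, the observation that the conjecture as literally stated is false at $\theta=0$: a $0$-line is a tangent line, so $I_{f,0}(P)=I_{f}(P)$, and Theorem \ref{T2}(iii) exhibits $P$ with $I_{f}(P)=0$; the conjecture should therefore read $0<\theta\leq\pi/2$. Second, the reformulation via $G_{\theta}^{\pm}(c)=f(c)+(s-c)\,m_{\theta}^{\pm}(c)$ with $m_{\theta}^{\pm}(c)=\bigl(f\,^{\prime}(c)\pm\tan\theta\bigr)/\bigl(1\mp f\,^{\prime}(c)\tan\theta\bigr)$ is the right analogue of $g_{s}$ and is the form in which partial results (say, for convex $f$ with asymptotes, paralleling Theorems \ref{T2} and \ref{T3}) would most plausibly be provable. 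But as it stands your proposal does not settle the conjecture, and you should not present it as a proof.
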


\end{document}